\documentclass[10pt,a4paper]{article}
\usepackage[utf8]{inputenc}
\usepackage[english]{babel}
\usepackage[a4paper, margin=25mm]{geometry}
\usepackage{mathtools}
\usepackage{dsfont}
\usepackage{graphicx}
\usepackage{pdfpages}
\usepackage{epigraph}
\usepackage{fancyhdr}
\usepackage{physics}
\usepackage{float}
\usepackage{caption}
\usepackage{tikz-cd}
\usepackage{subcaption}
\usepackage{tikz}
\usepackage{svg}
\usepackage{tcolorbox}
\usepackage{amsthm}
\usepackage{amssymb}
\usepackage{amsfonts}
\usepackage{hyperref}
\usepackage{cleveref}
\usepackage{lipsum}
\usepackage{enumitem}
\usepackage{tensor}
\usepackage{tcolorbox}
\usepackage{mathrsfs}
\usepackage{titling}
\usepackage{dsfont}
\usepackage[runin]{abstract}
\usepackage{appendix}


\hypersetup{
    colorlinks=,
    linkcolor={red!50!black},
    citecolor={blue!50!black},
    urlcolor={blue!80!black},
    pdfborder={1 1 1}
}


\setlength{\abstitleskip}{-\parindent}
\setlength{\absleftindent}{6mm}
\setlength{\absrightindent}{6mm} 
\setlength{\absparindent}{6mm}
\abslabeldelim{:}


\theoremstyle{definition}
\newtheorem{definition}{Definition}
\newtheorem{theorem}{Theorem}
\newtheorem{corollary}[theorem]{Corollary}
\newtheorem{lemma}{Lemma}
\newtheorem*{theorem*}{Theorem}

\numberwithin{definition}{section}
\numberwithin{theorem}{section}
\numberwithin{lemma}{section}

\makeatletter
\newcommand\RedeclareMathOperator{%
  \@ifstar{\def\rmo@s{m}\rmo@redeclare}{\def\rmo@s{o}\rmo@redeclare}%
}
\newcommand\rmo@redeclare[2]{%
  \begingroup \escapechar\m@ne\xdef\@gtempa{{\string#1}}\endgroup
  \expandafter\@ifundefined\@gtempa
     {\@latex@error{\noexpand#1undefined}\@ehc}%
     \relax
  \expandafter\rmo@declmathop\rmo@s{#1}{#2}}
\newcommand\rmo@declmathop[3]{%
  \DeclareRobustCommand{#2}{\qopname\newmcodes@#1{#3}}%
}
\@onlypreamble\RedeclareMathOperator
\makeatother

\DeclareMathOperator{\N}{\mathbb{N}}
\DeclareMathOperator{\R}{\mathbb{R}}
\DeclareMathOperator{\Z}{\mathbb{Z}}

\RedeclareMathOperator{\L}{\mathcal{L}}

\RedeclareMathOperator{\S}{\mathbb{S}}
\RedeclareMathOperator{\P}{\mathbb{P}}

\RedeclareMathOperator{\O}{\mathcal{O}}

\newcommand{\innp}[2]{\left\langle #1, #2 \right\rangle}


\title{\vspace{-0.5in}{\bfseries\scshape 
    Towards an Approximation Theory of Observable Operator Models
    }
}

\author{
    \normalsize{Wojciech Anyszka, \textit{University of Groningen}}
}

\date{\vspace{-5ex}}

\begin{document}

\maketitle
\begin{abstract}
{Observable operator models (OOMs) offer a powerful framework for modelling stochastic processes, surpassing the traditional hidden Markov models (HMMs) in generality and efficiency. However, using OOMs to model infinite-dimensional processes poses significant theoretical challenges. This thesis presents an exploration of a rigorous approach to developing an approximation theory for OOMs of infinite-dimensional processes. Building upon foundational work outlined in an unpublished tutorial \cite{unpub-tutorial}, an inner product structure on the space of future distributions is rigorously established and the continuity of observable operators with respect to the associated 2-norm is proven. The original theorem proven in this thesis describes a fundamental obstacle in making an infinite-dimensional space of future distributions into a Hilbert space. The presented findings lay the groundwork for future research in approximating observable operators of infinite-dimensional processes, while a remedy to the encountered obstacle is suggested. \vspace{6ex}}
\end{abstract}

{\bf Note:} This document is a copy of a thesis completed as part of the BSc in Artificial Intelligence at the University of Groningen, with the original available at the university's repository \cite{my_thesis}. This thesis was supervised by Herbert Jaeger.

\section{Introduction}\label{sec:introduction}

 Observable operator models (OOMs) are mathematical models of stochastic processes that properly generalize hidden Markov models (HMMs) \cite{first-OOM}. Despite being more general than HMMs, their learning algorithm is more efficient. This leads to OOMs outperforming HMMs on a variety of datasets \cite{ESAlg-paper} and being used in applications ranging from robotics \cite{OOMsInRobotics} to chemistry \cite{chemistry-OOM}.

 The main idea of OOMs is to identify updates due to new observations with operators acting on a vector space spanned by future distributions of the process. The dimension of this vector space is called the dimension of the stochastic process. Intuitively, the larger this dimension is the more information each observation contains. It is expected that, due to their complexity, most real-world stochastic processes will have infinite dimensions. However, such a process cannot be directly modelled on a computer. Consequently, there is a need to develop a theory which would give theoretical guarantees that allow approximating an infinite-dimensional process with finite-dimensional ones.

 In the case of discrete-time, stationary stochastic processes with a countable set of possible observations, the beginning of such a theory has been developed in an unpublished tutorial \cite{unpub-tutorial}. In this tutorial, the space of future distributions is equipped with an inner product and the observable operators are proven to be continuous with respect to the associated $2$-norm. The hope is that if this inner product space is a Hilbert space, one would have at least two pathways for approximating observable operators. The first one is to prove that the observable operators are compact and then use the approximation property of Hilbert spaces to approximate them with finite-rank operators. The second pathway is to prove that the Hilbert space is separable, and then the observable operators can be approximated (in the strong-operator topology) by finite-rank operators. Both of those pathways would allow the development of theoretical guarantees for the desired approximation.

The aforementioned unpublished tutorial \cite{unpub-tutorial} contains only the general idea of the construction and all of the results are without proof. Unfortunately, these important details have been lost. This thesis has two research objectives. The first one is to recover all the mathematical details of the construction presented in the unpublished tutorial as well as reinvent the missing proofs. The second objective is to investigate under what conditions the space of future distributions together with the inner product constructed in \cite{unpub-tutorial} is a Hilbert space. This is an open question and answering it is the first step towards developing an approximation theory for OOMs via the two outlined pathways.

The thesis is structured as follows. In Section 2, the necessary concepts from stochastic processes are reviewed and the used notation is set. Section 3 introduces abstract OOMs and their basic well-known properties. The fourth section rigorously introduces an inner-product structure on the space of future distributions. Finally, in Section 5, the reinvented proofs of various previously known results regarding this inner product space are shown and the main original theorem --- addressing the second research objective --- is established. It states the aforementioned inner product space is a Hilbert space if and only if it is finite-dimensional. In the final section, conclusions are drawn and directions for future research are indicated. The measure-theoretic notions used in this thesis are briefly summarized in Appendix A.
\section{Stochastic Processes}

In this section, the basic notions and notations are introduced for a discrete-time, stationary stochastic process with a countable set of observations $\O$. 

The individual observations will be denoted by lowercase letters with natural number subscript, i.e. $\O=\{a_{i}\mid i\in \N\}$. A discrete-time stochastic process with values in $\O$ is a collection of random variables $(X_{t})_{t\in \Z}$ each of which takes values in the set $\O$. These random variables can be thought to represent the observation outcome at the corresponding time $t\in \Z$. The probability of outcome $a_{i_{1}}$ at $t_{1}$,...,  $a_{i_{k}}$ at ${t_{k}}$ will be denoted by
\begin{align*}
    \P(X_{t_{1}}=a_{i_{1}},...,X_{t_{k}}=a_{i_{k}}).
\end{align*}
As already mentioned, this thesis will only concern stationary discrete-time stochastic processes. These have an additional property that 
\begin{align*}
    \P(X_{t_{1}}=a_{i_{1}},...,X_{t_{k}}=a_{i_{k}})=\P(X_{t_{1}+l}=a_{i_{1}},...,X_{t_{k}+l}=a_{i_{k}})
\end{align*}
for any integer $l$. In other words, a stationary stochastic process has a time translation invariance. Because of this feature, when outcomes at consecutive times steps are examined the reference to random variables can be dropped, i.e.
\begin{align*}
    \P(a_{i_{1}}\ldots a_{i_{k}}) := \P(X_{t_{1}}=a_{i_{1}},\ldots, X_{t_{1}+k-1}=a_{i_{k}}).
\end{align*}
An arbitrary sequence of consecutive outcomes will be denoted by lowercase letters with a bar, i.e. $\bar{a}\in \O^{k}$ for some $k\in \N$. Using this notation, the fact that some sequence of length $k$ must occur can be expressed by
\begin{align*}
    \sum_{\bar{a}\in\O^{k}} \P(\bar{a}) = 1.
\end{align*}
Similarly, the probability of a sequence $\bar{a}$ conditioned on a directly preceding sequence $\bar{b}=(a_{j_{1}},...,a_{j_{m}})$ will be denoted by
\begin{align*}
    \P(\bar{a}\mid\bar{b}):= \P(X_{t_{1}}=a_{i_{1}},\ldots, X_{t_{1}+k-1}=a_{i_{k}} \mid X_{t_{1}-m} = a_{j_{1}},\ldots, X_{t_{1}-1} = a_{j_{m}}).
\end{align*}
With this notation, the formula for conditional probability can be written as
\begin{align*}
    \P(\bar{b}\bar{a}) = \P(\bar{a}\mid \bar{b})\P(\bar{b}),
\end{align*}
where $\bar{b}\bar{a}$ denotes concatenation of sequences. Finally, a lowercase letter with a tilde will denote an infinite sequence of symbols, e.g. $\tilde{a}=a_{1}a_{2}\ldots$, while $\tilde{a}^{k} := a_{1}a_{2}\ldots a_{k}$.

\section{Abstract OOMs}

With the notation set, let us introduce abstract OOMs for modelling a discrete-time, stationary stochastic process $(X_{t})_{t\in \Z}$ with a countable set of observations $\O$. For the concrete representations of OOMs and their relation to HMMs, the reader is advised to consult \cite{ESAlg-paper}.

First, let us set $\O^{+}:=\bigcup_{i=1}^{\infty}\O^{i}$ and $\O^{\ast}:=\O^{+} \cup \{\epsilon\}$, where $\epsilon$ denotes the empty string. The observable operators will be acting on the vector space spanned by the functions $g_{\bar{a}}: \O^{\ast}\to \R $ defined for each $\bar{a}\in \O^{\ast}$ by
\begin{align*}
    g_{\bar{a}}(\bar{b}) := \begin{cases}
        \P(\bar{b}\mid\bar{a}) & \text{if } \P(\bar{a}) \neq 0\\
        0 &\text{otherwise,}
    \end{cases}
\end{align*}
with the convention that $g_{\bar{a}}(\epsilon)=1$ if $P(\bar{a})\neq 0$, otherwise it is $0$. Intuitively, the function $g_{\bar{a}}$ describes the distribution of a process given a previous observation $\bar{a}$. We denote the vector space --- with addition and scalar multiplication defined pointwise --- spanned by such functions $g_{\bar{a}}$ by $\mathcal{G}$ and call it the {\bf space of future distributions}. A crucial observation at this point is that even though $\mathcal{G}$ can be seen as a subspace of the vector space of real-valued functions on $\O^{\ast}$, this ambient space does not have an intrinsic topology and so taking infinite linear combinations of functions $g_{\bar{a}}$ does not make sense. Therefore $\mathcal{G}$ is the space of all finite linear combinations of these functions. Since every spanning set can be reduced to a basis, there exists a subset $\Lambda\subseteq \O^{\ast}$ such that $(g_{\bar{a}})_{\bar{a}\in \Lambda}$ is a Hamel basis for $\mathcal{G}$.

For each $a\in \O$, an observable operator $t_{a}:\mathcal{G}\to\mathcal{G}$ is defined by
\begin{align*}
    t_{a}(g_{\bar{c}})=\P(a\mid\bar{c})g_{\bar{c}a}
\end{align*}
for each $\bar{c}\in \Lambda$ and extended linearly to other elements of $\mathcal{G}$. This definition does not depend on the choice of basis $\Lambda$ as long as the basis elements are all of the form $g_{\bar{a}}$ for some $\bar{a}\in \O^{\ast}$. This follows from the fact that for any $\bar{b}\in \O^{\ast}$ with $g_{\bar{b}}=\sum_{\bar{c}\in\Lambda} \lambda_{\bar{c}} g_{\bar{c}}$ and $\P(\bar{b})\neq 0$, it holds that
\begin{align*}
    t_{a}(g_{\bar{b}})(w) &= \sum_{\bar{c}\in\Lambda} \lambda_{\bar{c}} t_{a}(g_{\bar{c}})(w)= \sum_{\substack{\bar{c}\in\Lambda,\\ \P(\bar{c}a)\neq 0}} \lambda_{\bar{c}} \P(a\mid\bar{c})\P(w\mid \bar{c}a)= \sum_{\substack{\bar{c}\in\Lambda,\\ \P(\bar{c}a)\neq 0}} \lambda_{\bar{c}} \P(aw\mid\bar{c}).
\end{align*}
But if $\P(\bar{c}a)=0$ then $\P(aw\mid \bar{c})=0$ and so
\begin{align*}
    t_{a}(g_{\bar{b}})(w)=\sum_{\substack{\bar{c}\in\Lambda}} \lambda_{\bar{c}} \P(aw\mid\bar{c})=\sum_{\substack{\bar{c}\in\Lambda}} \lambda_{\bar{c}} g_{\bar{c}}(aw)=g_{\bar{b}}(aw)=\P(aw\mid \bar{b}) = \P(\bar{a}  \mid\bar{b})\P(w\mid \bar{b}a)= \P(\bar{a}  \mid\bar{b})g_{\bar{b}a}(w),
\end{align*}
where we used that $\P(\bar{b})\neq 0$. Therefore $t_{a}(g_{\bar{b}})=\P(a\mid\bar{b})g_{\bar{b}a}$ for all $\bar{b}\in \O^{\ast}$ with $\P(\bar{b}) \neq 0$. Moreover, if $\P(\bar{b})=0$ then $g_{\bar{b}}\equiv 0$ which implies that $g_{\bar{b}}$ cannot be part of any basis of $\mathcal{G}$. Hence it follows that another choice of basis of the form $\{g_{\bar{a}}\}_{\bar{a}\in \Gamma}$ with $\Gamma\subseteq \O^{\ast}$ would lead to the same operator. Finally, define a functional $\sigma$ as the evaluation at $\epsilon$. Using the definition of $g_{\bar{a}}$ it follows that
\begin{align*}
    \sigma(g_{\bar{a}})=g_{\bar{a}}(\epsilon)=\begin{cases}
        1 &\text{if }\P(\bar{a})\neq 0\\
        0 &\text{else.}
    \end{cases}
\end{align*}

The quadruple $(\{t_{a}\}_{a\in\Lambda}, \sigma, g_{\epsilon}, \mathcal{G})$ is called an OOM of $(X_{t})_{t\in\Z}$. The dimension of the process is defined to be the vector space dimension of $\mathcal{G}$. Now, an OOM fully characterizes the underlying stochastic process $(X_{t})_{t\in\Z}$. This is a consequence of the following fundamental theorem of OOMs.
\begin{theorem}\label{OOM_Fun - thm}
\begin{align*}
    \P(a_{1}...a_{n})=\sigma(t_{a_{n}}...t_{a_{1}}g_{\epsilon}).
\end{align*}
\end{theorem}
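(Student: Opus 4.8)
The plan is to prove the identity by induction on the length $n$ of the observation sequence, after first isolating the crucial intermediate claim that the composite operator applied to $g_\epsilon$ remains a scalar multiple of a single future-distribution function. Concretely, I would first establish by induction the auxiliary identity
\begin{align*}
    t_{a_k}\cdots t_{a_1} g_\epsilon = \P(a_1\ldots a_k)\, g_{a_1\ldots a_k} \qquad \text{for all } k\in\N,
\end{align*}
and then recover the theorem by applying $\sigma$ to the case $k=n$.

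For the auxiliary identity, the base case $k=0$ (the empty composition, i.e.\ the identity map) reads $g_\epsilon=\P(\epsilon)\,g_\epsilon$, which holds because the empty string occurs with probability $\P(\epsilon)=1$. For the inductive step I would assume the identity for $k$ and apply $t_{a_{k+1}}$ to both sides; using linearity together with the relation $t_a(g_{\bar b})=\P(a\mid\bar b)\,g_{\bar b a}$ established in Section~3 (valid whenever $\P(\bar b)\neq 0$), I would compute
\begin{align*}
    t_{a_{k+1}}\bigl(\P(a_1\ldots a_k)\, g_{a_1\ldots a_k}\bigr) = \P(a_1\ldots a_k)\,\P(a_{k+1}\mid a_1\ldots a_k)\, g_{a_1\ldots a_{k+1}},
\end{align*}
and then collapse the product of the marginal and the conditional into $\P(a_1\ldots a_{k+1})$ by the conditional-probability formula $\P(\bar b\bar a)=\P(\bar a\mid\bar b)\P(\bar b)$ recorded in Section~2. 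With the auxiliary identity in hand for $k=n$, the theorem follows by applying the functional $\sigma$, which is evaluation at $\epsilon$: since $\sigma(g_{a_1\ldots a_n})=g_{a_1\ldots a_n}(\epsilon)$ equals $1$ exactly when $\P(a_1\ldots a_n)\neq 0$, the scalar $\P(a_1\ldots a_n)$ is reproduced.

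The step I expect to demand the most care is the degenerate case in which some intermediate probability $\P(a_1\ldots a_k)$ vanishes, because both the definition of $t_a$ and the formula $t_a(g_{\bar b})=\P(a\mid\bar b)\,g_{\bar b a}$ were only derived under the hypothesis $\P(\bar b)\neq 0$. In that situation, however, the argument degenerates harmlessly: $\P(\bar a)=0$ forces $g_{\bar a}\equiv 0$, so the inductive hypothesis already supplies the zero vector, every further operator sends $0$ to $0$ by linearity, and the target scalar $\P(a_1\ldots a_n)=0$ is matched since concatenation can only decrease probability ($\P(a_1\ldots a_n)\le\P(a_1\ldots a_k)$). I would therefore split the inductive step into the cases $\P(a_1\ldots a_k)\neq 0$ and $\P(a_1\ldots a_k)=0$, verifying that both sides are identically zero in the latter and that the final application of $\sigma$ likewise gives $0\cdot 0=0=\P(a_1\ldots a_n)$, so that the claimed identity holds unconditionally.
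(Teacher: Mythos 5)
Your proof is correct and takes essentially the same route as the paper's: the paper's argument is exactly your induction unrolled, pulling out one conditional probability at a time via $t_{a}(g_{\bar{b}})=\P(a\mid\bar{b})\,g_{\bar{b}a}$ and finishing with the fact that $\sigma(g_{\bar{b}})$ is $1$ when $\P(\bar{b})\neq 0$ and $0$ otherwise. If anything, your explicit handling of the degenerate case $\P(a_{1}\ldots a_{k})=0$ is more careful than the paper's chain of equalities, which tacitly writes conditional probabilities that are undefined in that situation.
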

\begin{proof}
    \begin{align*}
    \sigma(t_{a_{n}}...t_{a_{1}}g_{\epsilon})&=\sigma(t_{a_{n}}...\P(a_{1}\mid \epsilon)g_{a_{1}})\\
    &=\P(a_{1})\sigma(t_{a_{n}}...t_{a_{2}}g_{a_{1}})\\
    &=\P(a_{2}\mid a_{1})\P(a_{1})\sigma(t_{a_{n}}...t_{a_{3}}g_{a_{1}a_{2}})\\
    &= \sigma(g_{a_{1}a_{2}...a_{n}})\P(a_{n}\mid a_{1}...a_{n-1})...\P(a_{2}\mid a_{1})\P(a_{1})\\
    &= \P(a_{1}...a_{n}),
\end{align*}
where it was used that $\sigma(g_{\overline{b}})=1$ for all $\overline{b}$ with $\P(\bar{b})\neq0$ and otherwise it is $0$. 
\end{proof}

Therefore, to model a stochastic process $(X_{t})_{t\in \Z}$ it suffices to learn a corresponding OOM. For finite-dimensional processes there is an efficient learning algorithm to learn an OOM, it is called Efficiency Sharpening (ES) \cite{ESAlg-paper}. The goal of this thesis is to contribute to the development of an approximation theory for infinite-dimensional processes. 

\section{The Inner Product Construction}

As was outlined in the introduction, the starting point for developing an approximation theory is to endow the space $\mathcal{G}$ with an inner product. To this end, let $F=\{a_{1}a_{2}\ldots \mid a_{i}\in \O\}$ be the set of all right infinite sequences of symbols. For $b_{1},\ldots, b_{k}\in \O$ define $E(b_{1},...b_{k})\subseteq F$ to be
\begin{align*}
    E(b_{1},\ldots, b_{k}) := \{a_{1}a_{2}\ldots \in F \mid a_{1}=b_{1}, \ldots, a_{k} = b_{k}\}.
\end{align*}
Now set
\begin{align*}
    \mathcal{E} :=  \bigcup_{k=1}^{\infty} \left\{ \bigcup_{\bar{b}\in K} E(\bar{b}) \mid K\subseteq \O^{k}\right\},
\end{align*}
i.e. $\mathcal{E}$ is the set of all homogeneous unions of sets $E(\bar{b})$, where homogeneity means that each union is over sequences of the same length.
\begin{lemma}\label{algebra}
    $\mathcal{E}$ is a Boolean algebra over $F$.
\end{lemma}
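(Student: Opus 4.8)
The plan is to verify the three defining closure properties of a Boolean algebra (equivalently, a field of sets) over $F$: that $\mathcal{E}$ contains $\emptyset$ and $F$, that it is closed under complementation relative to $F$, and that it is closed under finite unions; closure under finite intersections then follows by De Morgan's laws. The engine behind all of these is the observation that, for each fixed $k$, the cylinders $\{E(\bar{b})\}_{\bar{b}\in\O^{k}}$ form a partition of $F$: they are pairwise disjoint, since an infinite sequence cannot have two distinct length-$k$ prefixes, and they cover $F$, since every sequence has exactly one such prefix. I would establish this partition property first, as a standalone observation.

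The easy memberships come next. Taking $K=\emptyset\subseteq\O^{1}$ gives the empty union $\emptyset\in\mathcal{E}$, and taking $K=\O^{1}$ gives $\bigcup_{\bar{b}\in\O^{1}}E(\bar{b})=F\in\mathcal{E}$ by the covering half of the partition property. For complementation, given $A=\bigcup_{\bar{b}\in K}E(\bar{b})$ with $K\subseteq\O^{k}$, the partition property yields directly that $F\setminus A=\bigcup_{\bar{b}\in\O^{k}\setminus K}E(\bar{b})$, since a sequence lies in $A$ exactly when its length-$k$ prefix lies in $K$; as $\O^{k}\setminus K\subseteq\O^{k}$, this complement is again a homogeneous union of the same length, hence in $\mathcal{E}$.

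The crux is closure under finite unions, and this is where the homogeneity constraint --- that every union be taken over sequences of one common length --- must be handled carefully. The obstacle is that two members of $\mathcal{E}$ may be built from cylinders of different lengths $k_{1}\neq k_{2}$, so their union is not immediately homogeneous. I would resolve this with a refinement lemma: for any $k'\geq k$, a length-$k$ cylinder splits as $E(b_{1},\ldots,b_{k})=\bigcup E(b_{1},\ldots,b_{k},c_{k+1},\ldots,c_{k'})$, the union running over all $c_{k+1},\ldots,c_{k'}\in\O$; since the definition of $\mathcal{E}$ places no cardinality restriction on the index set $K$, such a possibly-infinite refinement is admissible. Consequently any $A\in\mathcal{E}$ realized at length $k$ is equally realized at every length $k'\geq k$, by replacing its index set $K$ with $\{\bar{d}\in\O^{k'}\mid \bar{d}\text{ has a length-}k\text{ prefix in }K\}$. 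To finish, given $A$ at length $k_{1}$ and $B$ at length $k_{2}$, I would refine both to the common length $\max(k_{1},k_{2})$ and take the union of their (now same-length) index sets, exhibiting $A\cup B$ as a single homogeneous union and hence a member of $\mathcal{E}$. I expect this refinement-to-a-common-length step to be the main technical point; everything else reduces to the partition property.
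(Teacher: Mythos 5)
Your proof is correct, but it takes a genuinely different route from the paper's for the crucial closure property. The paper, like you, obtains $F\in\mathcal{E}$ and complement closure from the fact that the length-$k$ cylinders partition $F$ (so the complement of a union over $K\subseteq\O^{k}$ is the union over $\O^{k}\setminus K$). Where you diverge is the heterogeneous-length problem: the paper proves closure under \emph{intersections}, using the dichotomy that for $\bar{b}\in\O^{k}$, $\bar{a}\in\O^{l}$ with $k\geq l$, the intersection $E(\bar{b})\cap E(\bar{a})$ is either $E(\bar{b})$ or $\emptyset$; a pairwise intersection of two members of $\mathcal{E}$ is then exhibited as a homogeneous union at the longer length (with union closure left implicit via De Morgan). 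You instead prove closure under \emph{unions} directly, via a refinement lemma: every length-$k$ cylinder is the (possibly infinite, but admissible) union of all its length-$k'$ extensions, so any member of $\mathcal{E}$ can be re-realized at any finer level, and two members can be merged at a common level by taking the union of index sets. Your approach matches the paper's stated definition of a Boolean algebra (which asks for finite unions, not intersections) without invoking De Morgan, and the refinement principle is a clean, reusable structural fact. The paper's approach has a different payoff: the nested-or-disjoint dichotomy it isolates is exactly the hypothesis needed later in \Cref{partition-lemma} and in the pre-measure proof of \Cref{premeasure}, so proving it here sets up machinery that is reused downstream. Both arguments are complete and correct.
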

\begin{proof}
    \begin{enumerate}
        \item Take any $S= \bigcup_{\bar{b}\in K} E(\bar{b})\in \mathcal{E}$ with $K\subseteq \O^{k}$. Since $\O$ is countable so is $\O^{k}$. Consequently, the set $U =  \O^{k}\setminus K$ is countable as well. Noting that for $\bar{b}_{1},\bar{b}_{2}\in \O^{k}$, the sets $E(\bar{b}_{1}), E(\bar{b}_{2})$ are disjoint if and only if $\bar{b}_{1}\neq \bar{b}_{2}$, it follows that the complement of $S$ in $F$ is $S^{c} = \bigcup_{\bar{b}\in U}E(\bar{b})$. Clearly, $S^{c}$ belongs to $\mathcal{E}$ and so $\mathcal{E}$ is closed under complements.
        \item $F\in \mathcal{E}$ since $F=\bigcup_{\bar{b}\in \O^{k}} E(\bar{b})$.
        \item Finally, observe that for $\bar{b}\in\O^{k}, \bar{a}\in \O^{l}$ with $k\geq l$, it holds that
        \begin{align*}
            E(\bar{b})\cap E(\bar{a}) = \begin{cases}
                E(\bar{b}) &\text{if } \bar{b}=\bar{a}\bar{c}\text{ for some } \bar{c}\in \O^{k-l}\\
                \emptyset &\text{else,}
            \end{cases}
        \end{align*}
        where $\O^{0}:= \{\epsilon\}$. As $\emptyset = F^{c}\in \mathcal{E}$, it follows that $E(\bar{b})\cap E(\bar{a}) \in \mathcal{E}$. Now take any $S, T\in \mathcal{E}$, then
        \begin{align*}
            S\cap T = \left(\bigcup_{\bar{c}\in K} E(\bar{c})\right)\cap \left(\bigcup_{\bar{a}\in M} E(\bar{a})\right) = \bigcup_{\bar{c}\in K}\bigcup_{\bar{a}\in M} E(\bar{c})\cap E(\bar{a}),
        \end{align*}
        for some $K\subseteq \O^{k}, M\subseteq\O^{l}$.  Without loss of generality assume that $k\geq l$, thus each $E(\bar{c})\cap E(\bar{a})$ either equals $E(\bar{c})$ or $\emptyset$. Thus, 
        defining
        \begin{align*}
            L := \{\bar{c}\in K \mid E(\bar{c})\cap E(\bar{a})=E(\bar{c}) \text{ for some } \bar{a}\in M\}
        \end{align*}
        it holds that
        \begin{align*}
            S\cap T = \bigcup_{\bar{c}\in L}E(\bar{c}).
        \end{align*}
        From this, it follows that $S\cap T\in \mathcal{E}$ which concludes the proof.
    \end{enumerate}
\end{proof}

Next, for each $\bar{a}\in \O^{\ast}$, define a set function $\bar{\mu}_{\bar{a}}:\mathcal{E} \to [0,\infty]$ by
\begin{align*}
    \bar{\mu}_{\bar{a}}\left(\bigcup_{\bar{b}\in K} E(\bar{b})\right) := \sum_{\bar{b}\in K}g_{\bar{a}}(\bar{b})
\end{align*}
and set $\bar{\mu}_{\bar{a}}\left(\emptyset\right) := 0$. Note that the above series always converges (absolutely) since
\begin{align*}
    \sum_{\bar{b}\in K}g_{\bar{a}}(\bar{b}) \leq \sum_{\bar{b}\in \O^{k}}g_{\bar{a}}(\bar{b}) = 1
\end{align*}
and all of the summands are positive. The last line of the above computation also shows that $\bar{\mu}_{\bar{a}}(F)=1$. The goal is to prove that $\bar{\mu}_{\bar{a}}$ is a pre-measure on $\mathcal{E}$. For that, the following set-theoretic result will be needed.
\begin{lemma}\label{partition-lemma}
    Let $S$ be a set. Suppose $\{A_{i}\}_{i\in I}, \{B_{j}\}_{j\in J}$ are partitions of $S$ with the property that for each $i\in I$ and $j\in J$, $A_{i}$ and $B_{j}$ are either disjoint or one is the subset of the other. Then, there exists a partition of $I$ into sets $\{I_{0}\}\cup \{I_{\lambda}\}_{\lambda\in J_{0}}$, i.e. $I=I_{0}\cup\; \bigcup_{\lambda\in J_{0}}I_{\lambda}$, and a partition of $J$ into sets $\{J_{0}\}\cup \{J_{\gamma}\}_{\gamma\in I_{0}}$ with the following properties
    \begin{enumerate}
        \item For each $i\in I_{0}$
        \begin{align*}
            A_{i} = \bigcup_{j\in J_{i}} B_{j}
        \end{align*}
        \item For each $j\in J_{0}$
        \begin{align*}
            B_{j}  = \bigcup_{i\in I_{j}} A_{i}
        \end{align*}
    \end{enumerate}
 \end{lemma}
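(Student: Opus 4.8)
The plan is to read the hypothesis as a laminar (nesting) condition and to split each partition into its \emph{large} blocks and its \emph{small} blocks. Concretely, I would define $I_0 := \{i\in I : A_i \not\subsetneq B_j \text{ for all } j\in J\}$ --- the indices whose block $A_i$ is not \emph{properly} contained in any $B_j$ --- and dually $J_0 := \{j\in J : B_j \not\subseteq A_i \text{ for all } i\in I\}$. The asymmetry between $\not\subsetneq$ and $\not\subseteq$ is deliberate: it is exactly the tie-break needed to handle pairs with $A_i = B_j$, which would otherwise be counted on both sides. The intended meaning is that $I_0$ collects the $A_i$ that decompose as unions of $B_j$'s, while $J_0$ collects the $B_j$ that decompose as unions of $A_i$'s.

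Next I would introduce the fibres $J_i := \{j\in J : B_j\subseteq A_i\}$ for $i\in I_0$ and $I_j := \{i\in I : A_i\subseteq B_j\}$ for $j\in J_0$, and claim these are the sets $J_\gamma$ and $I_\lambda$ in the statement. The bulk of the work is then to verify the two decompositions $I = I_0 \cup \bigcup_{j\in J_0} I_j$ and $J = J_0 \cup \bigcup_{i\in I_0} J_i$ as disjoint unions, together with the union formulas in properties (1) and (2). Throughout I would use two elementary facts (valid because partition blocks are nonempty and pairwise disjoint): (a) a nonempty set lies in at most one block, which gives uniqueness, and (b) a block contained in another block of the \emph{same} partition must coincide with it.

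For the $J$-decomposition I would argue from the dichotomy supplied by the hypothesis. Fixing $j$, either $B_j\subseteq A_i$ for some $i$ --- necessarily unique by (a), and this $i$ must lie in $I_0$, since $B_j\subseteq A_i\subsetneq B_{j'}$ would force $B_j\subseteq B_{j'}$, hence $j=j'$ and the contradiction $A_i\subsetneq B_j\subseteq A_i$ --- so $j$ lands in exactly one $J_i$; or $B_j$ is contained in no $A_i$, whence $j\in J_0$ and $B_j=\bigcup_{i\in I_j}A_i$ because every $A_i$ meeting $B_j$ must then sit inside it. The $I$-side is symmetric, the key observation being that for $j\in J_0$ no $A_i$ can equal $B_j$, so every inclusion $A_i\subseteq B_j$ is actually proper, which keeps $I_0$ disjoint from the fibres $I_\lambda$. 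Finally, for $i\in I_0$ the identity $A_i=\bigcup_{j\in J_i}B_j$ follows because $A_i$ meets no $B_j$ in a proper-subset fashion, so it decomposes into whole blocks $B_j$.

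I expect the only genuine obstacle to be the bookkeeping around equal blocks $A_i=B_j$: it is precisely here that the disjointness of $I_0$ from the fibres $I_\lambda$ (and of $J_0$ from the $J_\gamma$) can fail if the strict/non-strict distinction in the definitions of $I_0$ and $J_0$ is not chosen consistently. Everything else --- existence and uniqueness of a containing block, and the two union identities --- is routine once facts (a) and (b) are in hand.
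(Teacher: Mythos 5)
Your proposal is correct and takes essentially the same route as the paper's proof: the paper defines $I_{0}=\{i\in I \mid \exists j,\ B_{j}\subseteq A_{i}\}$ and $J_{0}=\{j\in J \mid \exists i,\ A_{i}\subsetneq B_{j}\}$ with fibres given by nonempty intersection, and under the laminar hypothesis these coincide exactly with your sets and fibres, including the same strict/non-strict tie-break that sends blocks with $A_{i}=B_{j}$ into $I_{0}$ rather than $J_{0}$. The verification steps (uniqueness of the containing block, disjointness of $I_{0}$ from the fibres, and the two union identities) match the paper's argument as well.
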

\begin{proof}
    Consider that for $i\in I$,
    \begin{align*}
        A_{i} = A_{i}\cap S = A_{i} \cap \bigcup_{j\in J} B_{j} = \bigcup_{j\in J} A_{i}\cap B_{j}. 
    \end{align*}
    Since $A_{i}$ is nonempty, there must be an index $j_{0}\in J$ such that $A_{i}\cap B_{j_{0}}$ is nonempty. From the assumptions, it follows that either $A_{i}\subsetneq B_{j_{0}}$ or $A_{i}\supseteq B_{j_{0}}$. Now, let us define $I_{0}$ by
    \begin{align*}
        I_{0} := \{ i\in I \mid \text{there exists } j\in J \text{ s.t. } A_{i}\supseteq B_{j}\}.
    \end{align*}
    and for any $i\in I_{0}$ define
    \begin{align*}
        J_{i} := \{j\in J \mid B_{j}\cap A_{i} \neq \emptyset\}.
    \end{align*}
    Let us make the corresponding definitions for the $\{B_{j}\}$ partition. Namely,
    \begin{align*}
        J_{0} := \{ j\in J \mid \text{there exists } i\in I \text{ s.t. } A_{i}\subsetneq B_{j}\},
    \end{align*}
    and for each $j\in J_{0}$
    \begin{align*}
        I_{j} := \{ i\in I \mid B_{j}\cap A_{i} \neq \emptyset\}.
    \end{align*}
    The claim is that $\{I_{0}\} \cup \{I_{\lambda}\}_{\lambda\in J_{0}}$ and $\{J_{0}\} \cup \{J_{\gamma}\}_{\gamma\in I_{0}}$ are partitions of $I$ and $J$, respectively, with the desired properties. First, let us show that the collection of sets $\{I_{0}\} \cup \{I_{\lambda}\}_{\lambda\in J_{0}}$ partitions $I$. Well indeed, take any $i\in I$ then there must be an index $\hat{j}\in J$ such that $A_{i}\cap B_{\hat{j}}$ is nonempty. From the assumptions, it is known that either $A_{i}\subsetneq B_{\hat{j}}$ or $A_{i}\supseteq B_{\hat{j}}$. If $A_{i}\subsetneq B_{\hat{j}}$ then clearly $i\notin I_{0}$. Additionally, $\hat{j}\in J_{0}$ and hence $i\in I_{\hat{j}}$, since $\{B_{j}\}$ partitions $S$ and $A_{i}\subsetneq B_{\hat{j}}$, this $\hat{j}\in J$ is unique. This proves that if $A_{i}\subsetneq B_{\hat{j}}$ then $i$ belongs to a unique set from the collection $\{I_{0}\} \cup \{I_{\lambda}\}_{\lambda\in I_{0}}$. On the other hand, if $A_{i}\supseteq B_{\hat{j}}$ then $i\in I_{0}$. Moreover, $i\notin I_{j}$ for all $j\in J_{0}$. To see this, suppose towards a contradiction that $i\in I_{j}$ for some $j\in J_{0}$ but then $A_{i}\cap B_{j}\neq \emptyset$. This would, in turn, imply that there is $\tilde{i}\in I$ such that $A_{\tilde{i}}\subsetneq B_{j}$ and since $\{A_{i}\}$ partition $S$ it would follow that $A_{i}\subsetneq B_{j}$\footnote{Recall that there are only three options: $A_{i}\cap B_{\hat{j}}= \emptyset$ or  $A_{i}\subsetneq B_{\hat{j}}$ or $A_{i}\supseteq B_{\hat{j}}$.} which is a contradiction. Consequently, each $i\in I$ belongs to a unique set from the collection $\{I_{0}\} \cup \{I_{\lambda}\}_{\lambda\in I_{0}}$ and hence this collection partitions $I$. An analogous argument proves that $\{J_{0}\} \cup \{J_{\gamma}\}_{\gamma\in I_{0}}$ partitions $J$. Finally, let us check the desired properties
    \begin{enumerate}[label=(\roman*)]
        \item Take $i\in I_{0}$ then
        \begin{align*}
        A_{i} = A_{i}\cap S= A_{i} \cap \bigcup_{j\in J} B_{j} = \bigcup_{j\in J} A_{i}\cap B_{j} =  \bigcup_{j\in J_{i}} A_{i}\cap B_{j}. 
    \end{align*}
    \item Take $j\in J_{0}$, then
    \begin{align*}
        B_{j}= B_{j}\cap S= B_{j} \cap \bigcup_{i\in I} A_{i} = \bigcup_{i\in I} B_{j}\cap A_{i} =  \bigcup_{i\in I_{j}} B_{j}\cap A_{i}.
    \end{align*}
    \end{enumerate}
\end{proof}

\begin{lemma}\label{premeasure}
    For any $\bar{a}\in \O^{\ast}$, the set function $\bar{\mu}_{\bar{a}}$ is a finite pre-measure on $\mathcal{E}$.  
\end{lemma}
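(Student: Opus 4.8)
The plan is to verify the two defining properties of a finite pre-measure on the algebra $\mathcal{E}$ (Lemma \ref{algebra}): that $\bar{\mu}_{\bar{a}}(\emptyset)=0$, which holds by the stated convention, and that $\bar{\mu}_{\bar{a}}$ is countably additive on $\mathcal{E}$; finiteness is already recorded since $\bar{\mu}_{\bar{a}}(F)=1$. If $\P(\bar{a})=0$ then $g_{\bar{a}}\equiv 0$, so $\bar{\mu}_{\bar{a}}\equiv 0$ and there is nothing to prove; hence I would assume $\P(\bar{a})\neq 0$ throughout. The one computational identity underlying everything is the consistency relation $g_{\bar{a}}(\bar{b})=\sum_{a'\in\O} g_{\bar{a}}(\bar{b}a')$, which is just the law of total probability $\P(\bar{b}\mid\bar{a})=\sum_{a'\in\O}\P(\bar{b}a'\mid\bar{a})$ and whose right-hand series converges because it is bounded by $g_{\bar{a}}(\bar{b})$.

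First I would establish \emph{refinement invariance and finite additivity}. Iterating the consistency relation shows that splitting a cylinder $E(\bar{b})$ into the depth-$m$ cylinders it contains leaves the value of $\bar{\mu}_{\bar{a}}$ unchanged. Consequently any two single-depth representations of the same set of $\mathcal{E}$ yield the same value — pass to a common depth, where the representation is unique because distinct equal-length cylinders are disjoint — so $\bar{\mu}_{\bar{a}}$ is well defined, and for finitely many disjoint sets in $\mathcal{E}$ finite additivity follows by refining them all to a common depth and adding the corresponding nonnegative terms.

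The substance is \emph{countable additivity}. By the standard criterion, a finitely additive set function on an algebra with $\bar{\mu}_{\bar{a}}(F)<\infty$ is a pre-measure if and only if it is continuous from above at $\emptyset$: whenever $\mathcal{E}\ni T_{m}\downarrow\emptyset$ one has $\bar{\mu}_{\bar{a}}(T_{m})\to 0$. (Equivalently, using Lemma \ref{partition-lemma} to match the depth-$k$ partition of $S=\bigsqcup_{n} S_{n}$ against the partition into the cylinders of the pieces $S_{n}$, the general statement reduces to the case of a single cylinder cut into countably many subcylinders.) The easy half, $\sum_{n}\bar{\mu}_{\bar{a}}(S_{n})\le\bar{\mu}_{\bar{a}}(S)$, is immediate from finite additivity and monotonicity applied to finite subfamilies; the reverse inequality is exactly continuity from above, since $S\setminus\bigsqcup_{n\le N}S_{n}\downarrow\emptyset$.

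The hard part, and the main obstacle, is this reverse inequality: I must rule out probability mass ``escaping to infinite depth''. The naive approach — represent the $T_{m}$ by nested finite-depth cylinder families and extract an infinite branch to contradict $\bigcap_{m}T_{m}=\emptyset$ — stumbles on the fact that $\O$ is infinite, so $F$ and its cylinders are not compact and König's lemma does not apply to the full, infinitely branching tree. My plan to overcome this is a tightness argument: since for each node $\sum_{a'\in\O} g_{\bar{a}}(\bar{b}a')=g_{\bar{a}}(\bar{b})<\infty$, cofinitely many children carry arbitrarily little mass, so given $\delta\le\bar{\mu}_{\bar{a}}(T_{m})$ for all $m$ I can prune the tree to a \emph{finitely} branching subtree that retains at least $\delta/2$ of the mass at every level, by spending a geometrically decaying $\epsilon$-budget across the levels. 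König's lemma then yields an infinite branch through this subtree, i.e. a point of $\bigcap_{m}T_{m}$, contradicting $\bigcap_{m}T_{m}=\emptyset$. Alternatively, since $\O$ with the discrete topology is Polish, one could invoke the Kolmogorov (Ionescu–Tulcea) extension theorem to realize $\bar{\mu}_{\bar{a}}$ as the restriction to $\mathcal{E}$ of a genuine probability measure on the cylinder $\sigma$-algebra, from which countable additivity on $\mathcal{E}$ is immediate.
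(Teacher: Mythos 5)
Your proposal is correct, but it takes a genuinely different route from the paper. The paper argues combinatorially: it matches the single-depth representation of $\bigcup_{n}A_{n}$ against the union of the representations of the individual $A_{n}$ via \Cref{partition-lemma}, and then rearranges the resulting absolutely convergent series of nonnegative terms. You instead reduce countable additivity of a finite, finitely additive set function on an algebra to continuity from above at $\emptyset$, and prove the latter by tightness plus K\"onig's lemma (the compact-class proof of the Kolmogorov/Ionescu--Tulcea extension, specialized to a countable alphabet). The comparison is instructive: in the paper's computation, the step $\bar{\mu}_{\bar{a}}\bigl(\bigcup_{\bar{b}\in N_{\bar{c}}}E(\bar{b})\bigr)=\sum_{\bar{b}\in N_{\bar{c}}}\bar{\mu}_{\bar{a}}(E(\bar{b}))$ is justified only by ``disjointedness'', yet the strings in $N_{\bar{c}}$ come from different $K_{i}$ and may have unbounded lengths, so that union is not homogeneous and the equality is not an instance of the definition of $\bar{\mu}_{\bar{a}}$: it is precisely countable additivity for a single cylinder partitioned into deeper cylinders --- the crux of the lemma, and exactly your ``mass escaping to infinite depth''. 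Your pruning argument (prune the infinitely branching tree to a finitely branching subtree retaining mass at least $\delta/2$, then apply K\"onig's lemma), or alternatively citing Ionescu--Tulcea, or pushing forward the law of the underlying process $(X_{t})_{t\in\Z}$, supplies the limiting argument that this step genuinely needs. In short, the paper's route buys elementary, self-contained bookkeeping but passes silently over the analytic heart of the statement; your route costs more machinery but names the actual obstacle --- non-compactness of cylinders over an infinite alphabet --- and resolves it. Your explicit verification that $\bar{\mu}_{\bar{a}}$ is well defined across depths (refinement invariance) also makes rigorous a point the paper leaves implicit.
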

\begin{proof} 
    By definition, $\bar{\mu}_{\bar{a}}(\emptyset)=0$. Now, take a disjoint family $\{A_{n}\mid n\in \N\}$ of elements of $\mathcal{E}$ such that $\bigcup_{i=1}^{\infty}A_{n}\in \mathcal{E}$. Take an arbitrary set $A_{n}$. Since $A_{n}\in \mathcal{E}$ it follows that
   \begin{align*}
       A_{n} = \bigcup_{\bar{b}\in K_{n}}E(\bar{b}),
   \end{align*}
   for some $K_{n}\subseteq \mathcal{O}^{k_{n}}$. Note that for different $\bar{b},\bar{c}\in K_{n}$, $E(\bar{b})$ and $E(\bar{c})$ are disjoint. Moreover, since for $m\neq n$,  $A_{n}$ and $A_{m}$ are disjoint, it also holds that $E(\bar{d})$ and $E(\bar{e})$ are disjoint for all $\bar{d}\in K_{n}, \bar{e}\in K_{m}$. Consequently, all of the sets $E(\bar{b})$ for $\bar{b} \in \bigcup_{n\in \N} K_{n}$ are pairwise disjoint. Now, since $\bigcup_{i=1}^{\infty}A_{n}\in \mathcal{E}$ it follows that there exists $L\subseteq \mathcal{O}^{l}$ for some $l\in \N$ such that
   \begin{align*}
       \bigcup_{n=1}^{\infty}A_{n} = \bigcup_{\bar{a}\in L}E(\bar{a}).
   \end{align*}
   On the other hand,
   \begin{align*}
        \bigcup_{n=1}^{\infty}A_{n}=\bigcup_{n=1}^{\infty}\bigcup_{\bar{b}\in K_{n}}E(\bar{b})=\bigcup_{\bar{b}\in \cup_{i\in\N}K_{i}}E(\bar{b}).
   \end{align*}
    Both of the above give a decomposition of $\bigcup_{i=1}^{\infty}A_{n}$ into disjoint sets. Moreover, since the sets $E(\bar{a})$ and $E(\bar{b})$ are always either disjoint or one is contained in the other, we can apply \Cref{partition-lemma} to obtain a partition of $L$ into $\{L_{0}\} \cup \{L_{\bar{b}}\}_{\bar{b}\in N_{0}}$ and a partition of $N:=\cup_{i\in\N}K_{i}$ into $\{N_{0}\} \cup \{N_{\bar{c}}\}_{\bar{c}\in L_{0}}$. Using the partition $L= L_{0}\cup\;\bigcup_{\bar{b}\in N_{0}} L_{\bar{b}}$ and the definition of $\bar{\mu}_{\bar{a}}$ gives
    \begin{align*}
       \bar{\mu}_{\bar{a}}\left(\bigcup_{n=1}^{\infty}A_{n}\right) &=  \bar{\mu}_{\bar{a}}\left(\bigcup_{\bar{b}\in L}E(\bar{b})\right) = \sum_{\bar{b}\in L} \bar{\mu}_{\bar{a}}(E(\bar{b}))  = \sum_{\bar{c}\in L_{0}} \bar{\mu}_{\bar{a}}(E(\bar{c})) + \sum_{\bar{b}\in N_{0}}\sum_{\bar{c}\in L_{\bar{b}}} \bar{\mu}_{\bar{a}}(E(\bar{c})).
    \end{align*}
    Applying property (i) from \Cref{partition-lemma} to the first term and using disjointedness of $E(\bar{c})$'s in the second one yields
    \begin{align*}    
          \bar{\mu}_{\bar{a}}\left(\bigcup_{n=1}^{\infty}A_{n}\right) &= \sum_{\bar{c}\in L_{0}} \bar{\mu}_{\bar{a}}(\cup_{\bar{b}\in N_{\bar{c}}}E(\bar{b})) + \sum_{\bar{b}\in N_{0}} \bar{\mu}_{\bar{a}}(\cup_{\bar{c}\in L_{\bar{b}}}E(\bar{c})).
    \end{align*}
    Making the mirror operation, i.e. applying property (ii) from \Cref{partition-lemma} to the second term and disjointedness of $E(\bar{b})$'s to the first one gives
    \begin{align*}
     \bar{\mu}_{\bar{a}}\left(\bigcup_{n=1}^{\infty}A_{n}\right) &   = \sum_{\bar{c}\in L_{0}}\sum_{\bar{b}\in N_{\bar{c}}} \bar{\mu}_{\bar{a}}(E(\bar{b})) + \sum_{\bar{b}\in N_{0}} \bar{\mu}_{\bar{a}}(E(\bar{b}))
     \end{align*}
     Finally, using that $\{N_{0}\} \cup \{N_{\bar{c}}\}_{\bar{c}\in L_{0}}$ partitions $N:=\bigcup_{i=1}^{\infty}K_{i}$ and (again) the disjointedness of $E(\bar{b})$'s, one obtains
     \begin{align*}
       \bar{\mu}_{\bar{a}}\left(\bigcup_{n=1}^{\infty}A_{n}\right) &= \sum_{\bar{b}\in N} \bar{\mu}_{\bar{a}}(E(\bar{b})) =\sum_{\bar{b}\in \cup_{i\in \N}K_{i}}\bar{\mu}_{\bar{a}}(E(\bar{b}))= \sum_{i=1}^{\infty} \sum_{\bar{b}\in K_{i}}\bar{\mu}_{\bar{a}}(E(\bar{b})) =\sum_{i=1}^{\infty} \bar{\mu}_{\bar{a}}(\cup_{\bar{b}\in K_{i}}E(\bar{b}))=\sum_{i=1}^{\infty} \bar{\mu}_{\bar{a}}(A_{i}).
    \end{align*}
   Note that all of the above sums were over countable index sets since they're subsets of a countable set $\mathcal{O}^{\ast}$. Moreover, all of the rearrangements are valid since the series in question converge absolutely. Therefore, it can be concluded that $\bar{\mu}_{\bar{a}}$ is a pre-measure on a Boolean algebra $\mathcal{E}$. Since for $\bar{a}\in\O^{\ast}$ with $\P(\bar{a})\neq 0$, it holds that
    \begin{align*}
        \bar{\mu}_{\bar{a}}(F) =\bar{\mu}_{\bar{a}}\left(\bigcup_{b\in \O} E(b)\right) = \sum_{b\in \O}\bar{\mu}_{\bar{a}}(E(b))= \sum_{b\in\O}\P(b\mid\bar{a})= 1,
    \end{align*}
    while $\bar{\mu}_{\bar{a}}(F)=0$ for $\bar{a}$ with $\P(\bar{a})=0$, it follows that the pre-measure $\mu_{\bar{a}}$ is finite.
\end{proof}

Using the previous two lemmas the following important result can be deduced.

\begin{theorem}\label{pre_to_full-thm}
        For any $\bar{a}\in \O^{\ast}$, there is unique finite measure $\mu_{\bar{a}}$ on the $\sigma$-algebra $\sigma(\mathcal{E})$ generated by $\mathcal{E}$ that extends the pre-measure $\bar{\mu}_{\bar{a}}$. It is given by
        \begin{align*}
            \mu_{\bar{a}}(A) = \inf\left\{\sum_{i=1}^{\infty}\bar{\mu}_{\bar{a}}(A_{i}) \mid A\subseteq \bigcup_{i=1}^{\infty}A_{i};\; A_{i}\in \mathcal{E} \text{ for each } i\in\mathbb{N}\right\}
        \end{align*}
\end{theorem}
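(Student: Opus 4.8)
The statement is precisely the conclusion of the Carath\'eodory extension theorem (also called the Hahn--Kolmogorov theorem) applied to the data assembled in the two preceding lemmas: \Cref{algebra} shows that $\mathcal{E}$ is a Boolean algebra, and \Cref{premeasure} shows that $\bar{\mu}_{\bar{a}}$ is a finite pre-measure on it. The plan is therefore to run the standard outer-measure construction for existence and then secure uniqueness from finiteness, indicating where each lemma is consumed.

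For existence, I would first define the set function $\mu_{\bar{a}}^{\ast}\colon \mathcal{P}(F)\to[0,\infty]$ by the displayed infimum formula; note that the collection of admissible covers is always nonempty, since $F\in\mathcal{E}$ covers every $A$, so the infimum is well defined and bounded above by $\bar{\mu}_{\bar{a}}(F)\le 1$. A routine check then shows that $\mu_{\bar{a}}^{\ast}$ is an outer measure: $\mu_{\bar{a}}^{\ast}(\emptyset)=0$ and monotonicity are immediate from the covering definition, while countable subadditivity follows from the usual $\varepsilon/2^{i}$ argument of concatenating near-optimal covers of each piece. Next I would invoke Carath\'eodory's criterion: the collection $\mathcal{M}$ of $\mu_{\bar{a}}^{\ast}$-measurable sets (those $E$ with $\mu_{\bar{a}}^{\ast}(T)=\mu_{\bar{a}}^{\ast}(T\cap E)+\mu_{\bar{a}}^{\ast}(T\cap E^{c})$ for every test set $T$) forms a $\sigma$-algebra on which $\mu_{\bar{a}}^{\ast}$ restricts to a complete measure.

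The heart of the argument --- and the step I expect to be the main obstacle --- is showing $\mathcal{E}\subseteq\mathcal{M}$ together with $\mu_{\bar{a}}^{\ast}\big|_{\mathcal{E}}=\bar{\mu}_{\bar{a}}$. Measurability of each $S\in\mathcal{E}$ reduces, via subadditivity, to the inequality $\mu_{\bar{a}}^{\ast}(T)\ge \mu_{\bar{a}}^{\ast}(T\cap S)+\mu_{\bar{a}}^{\ast}(T\cap S^{c})$, which one extracts from an arbitrary cover of $T$ by algebra elements by intersecting each cover element with $S$ and with $S^{c}$ (both in $\mathcal{E}$ by \Cref{algebra}) and applying finite additivity of $\bar{\mu}_{\bar{a}}$. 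The equality $\mu_{\bar{a}}^{\ast}=\bar{\mu}_{\bar{a}}$ on $\mathcal{E}$ is where countable additivity of the pre-measure --- the genuine content of \Cref{premeasure} --- is essential: the bound $\mu_{\bar{a}}^{\ast}(S)\le\bar{\mu}_{\bar{a}}(S)$ comes from the trivial one-set cover, whereas $\mu_{\bar{a}}^{\ast}(S)\ge\bar{\mu}_{\bar{a}}(S)$ requires that for any cover $S\subseteq\bigcup_{i}A_{i}$ with $A_{i}\in\mathcal{E}$ one has $\bar{\mu}_{\bar{a}}(S)\le\sum_{i}\bar{\mu}_{\bar{a}}(A_{i})$, which follows by disjointifying the cover inside the algebra and invoking countable additivity. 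Since $\mathcal{M}$ is a $\sigma$-algebra containing $\mathcal{E}$, it contains $\sigma(\mathcal{E})$, and I would set $\mu_{\bar{a}}:=\mu_{\bar{a}}^{\ast}\big|_{\sigma(\mathcal{E})}$; finiteness is immediate since $\mu_{\bar{a}}(F)=\bar{\mu}_{\bar{a}}(F)\le 1$.

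For uniqueness I would exploit finiteness through a Dynkin $\pi$--$\lambda$ argument. The algebra $\mathcal{E}$ is in particular closed under finite intersections by \Cref{algebra}, hence is a $\pi$-system generating $\sigma(\mathcal{E})$. If $\nu$ is any finite measure on $\sigma(\mathcal{E})$ extending $\bar{\mu}_{\bar{a}}$, then the collection $\{A\in\sigma(\mathcal{E}) \mid \mu_{\bar{a}}(A)=\nu(A)\}$ contains $\mathcal{E}$ and is a $\lambda$-system, where closure under proper differences and increasing unions uses finiteness of the two measures --- in particular $\mu_{\bar{a}}(F)=\nu(F)<\infty$ to handle the complement step. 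By Dynkin's theorem this $\lambda$-system contains $\sigma(\mathcal{E})$, forcing $\nu=\mu_{\bar{a}}$. Alternatively, one may simply cite the uniqueness clause of the Hahn--Kolmogorov theorem, whose $\sigma$-finiteness hypothesis holds trivially because $\bar{\mu}_{\bar{a}}(F)<\infty$.
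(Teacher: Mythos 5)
Your proposal is correct and takes essentially the same route as the paper: the paper's proof is a one-line application of the Hahn--Kolmogorov theorem (stated in its appendix) to \Cref{algebra} and \Cref{premeasure}, exactly the two ingredients you identify. The only difference is that you additionally unpack the internal proof of that extension theorem (the Carath\'eodory outer-measure construction and the Dynkin $\pi$--$\lambda$ uniqueness argument), which the paper treats as a black box.
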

\begin{proof}
    Lemmas \ref{algebra} and \ref{premeasure} allow us to apply the Hahn-Kolmogorov theorem and arrive at the conclusion.
\end{proof}

Thus for each $\bar{a}\in\O^{\ast}$ there is a measure $\mu_{\bar{a}}$ on $\sigma(\mathcal{E})$. Observe that one of the elements of $\O^{\ast}$ is special, namely the empty string $\epsilon$. Hence we also have a distinguished measure $\mu_{\epsilon}$. Using it, each of the other measures $\mu_{\bar{a}}$ can be expressed as a density with respect to the special $\mu_{\epsilon}$ measure. More specifically, consider the following.
\begin{lemma}
    For $\bar{a}\in\O^{\ast}$ such that $\P(\bar{a})\neq 0$, the measure $\mu_{\bar{a}}$ is majorized by $\frac{1}{\P(\bar{a})}\mu_{\epsilon}$, i.e. for any $A\in \sigma(\mathcal{E})$ it holds that $\mu_{\bar{a}}(A) \leq  \frac{1}{\P(\bar{a})}\mu_{\epsilon}(A)$. 
\end{lemma}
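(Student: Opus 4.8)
The plan is to prove the inequality first on the generating cylinders $E(\bar{b})$, then extend it to all of $\mathcal{E}$ by finite additivity, and finally push it up to $\sigma(\mathcal{E})$ using the explicit outer-measure formula supplied by \Cref{pre_to_full-thm}. The entire mathematical content will sit in the cylinder case; everything afterward is bookkeeping.

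First I would unwind the definitions on a single cylinder. For $\bar{b}\in\O^{k}$ one has $\bar{\mu}_{\bar{a}}(E(\bar{b}))=g_{\bar{a}}(\bar{b})=\P(\bar{b}\mid\bar{a})$ and $\bar{\mu}_{\epsilon}(E(\bar{b}))=g_{\epsilon}(\bar{b})=\P(\bar{b})$, so the claim on cylinders is exactly $\P(\bar{b}\mid\bar{a})\le \frac{1}{\P(\bar{a})}\P(\bar{b})$, which after clearing the denominator $\P(\bar{a})\neq 0$ reads $\P(\bar{a}\bar{b})\le \P(\bar{b})$. This last inequality is where stationarity enters: by the law of total probability, partitioning over the possible length-$|\bar{a}|$ prefixes preceding $\bar{b}$, I would write $\P(\bar{b})=\sum_{\bar{c}\in\O^{|\bar{a}|}}\P(\bar{c}\bar{b})$, and since every summand is nonnegative and $\bar{a}$ is one admissible value of $\bar{c}$, the single term $\P(\bar{a}\bar{b})$ is dominated by the full sum. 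Thus $\bar{\mu}_{\bar{a}}(E(\bar{b}))\le \frac{1}{\P(\bar{a})}\bar{\mu}_{\epsilon}(E(\bar{b}))$ for every cylinder.

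Next I would extend this to the whole Boolean algebra. Any $S\in\mathcal{E}$ is a \emph{disjoint} union $S=\bigcup_{\bar{b}\in K}E(\bar{b})$, so summing the cylinder inequality over $\bar{b}\in K$ and invoking the definition of $\bar{\mu}_{\bar{a}}$ gives $\bar{\mu}_{\bar{a}}(S)\le \frac{1}{\P(\bar{a})}\bar{\mu}_{\epsilon}(S)$ for all $S\in\mathcal{E}$. Finally I would lift this to $\sigma(\mathcal{E})$ through the outer-measure representation of \Cref{pre_to_full-thm}. For any countable cover $A\subseteq\bigcup_{i}A_{i}$ with $A_{i}\in\mathcal{E}$, the algebra-level inequality yields $\sum_{i}\bar{\mu}_{\bar{a}}(A_{i})\le \frac{1}{\P(\bar{a})}\sum_{i}\bar{\mu}_{\epsilon}(A_{i})$; since a cover-wise inequality between two functionals passes to their infima, taking the infimum over all covers and pulling the positive constant $\frac{1}{\P(\bar{a})}$ out of the infimum for $\mu_{\epsilon}$ gives $\mu_{\bar{a}}(A)\le \frac{1}{\P(\bar{a})}\mu_{\epsilon}(A)$ for every $A\in\sigma(\mathcal{E})$.

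The only genuinely substantive step is the cylinder inequality $\P(\bar{a}\bar{b})\le\P(\bar{b})$, so that is where I expect the main (though modest) obstacle to be — specifically, making sure the prefix-marginalization identity is justified by stationarity rather than assumed. The passage from cylinders to $\mathcal{E}$ is immediate from disjointness, and the passage from $\mathcal{E}$ to $\sigma(\mathcal{E})$ is entirely formal once one observes that the explicit infimum formula is available for both $\mu_{\bar{a}}$ and $\mu_{\epsilon}$; the one point deserving a word of care is that monotonicity of the infimum requires the inequality to hold for each fixed cover before taking limits, which it does.
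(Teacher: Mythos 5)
Your proposal is correct and follows essentially the same route as the paper: establish $\bar{\mu}_{\bar{a}}\le\frac{1}{\P(\bar{a})}\bar{\mu}_{\epsilon}$ on $\mathcal{E}$ via the pointwise inequality $\P(\bar{a}\bar{b})\le\P(\bar{b})$ and then lift it to $\sigma(\mathcal{E})$ through the infimum formula of \Cref{pre_to_full-thm}. The only difference is cosmetic: you justify $\P(\bar{a}\bar{b})\le\P(\bar{b})$ explicitly by marginalizing over prefixes (a worthwhile addition), whereas the paper uses that inequality without comment.
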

\begin{proof}
    First, observe that $\frac{1}{\P(\bar{a})}\mu_{\epsilon}$ is indeed a measure as a positive multiple of measure $\mu_{\epsilon}$. Now take any $S\subseteq \mathcal{O}^{k}$, then
    \begin{align*}
        \mu_{\bar{a}}\left(\bigcup_{\bar{b}\in S}E(\bar{b})\right) = \sum_{\bar{b}\in S}\mu_{\bar{a}}(E(\bar{b})) =  \frac{1}{\P(\bar{a})} \sum_{\bar{b}\in S}\P(\bar{a}\bar{b})\leq \frac{1}{\P(\bar{a})} \sum_{\bar{b}\in S}\P(\bar{b}) = \frac{1}{\P(\bar{a})} \sum_{\bar{b}\in S}\mu_{\epsilon}(\bar{b}) = \frac{1}{\P(\bar{a})} \mu_{\epsilon}\left(\bigcup_{\bar{b}\in S}E(\bar{b})\right).
    \end{align*}
    In other words,
    \begin{align*}
        \mu_{\bar{a}}(B) \leq  \frac{1}{\P(\bar{a})}\mu_{\epsilon}(B)
    \end{align*}
    for any $B\in \mathcal{E}$. Finally, take any $A\in \sigma(\mathcal{E})$ and consider that
    \begin{align*}
        \mu_{\bar{a}}(A) &= \inf\left\{\sum_{i=1}^{\infty}\bar{\mu}_{\bar{a}}(A_{i}) \mid A\subseteq \bigcup_{i=1}^{\infty}A_{i};\; A_{i}\in \mathcal{E} \text{ for each } i\in\mathbb{N}\right\}\\ &\leq \inf\left\{\frac{1}{\P(\bar{a})}\sum_{i=1}^{\infty}\bar{\mu}_{\epsilon}(A_{i}) \mid A\subseteq \bigcup_{i=1}^{\infty}A_{i};\; A_{i}\in \mathcal{E} \text{ for each } i\in\mathbb{N}\right\} = \frac{1}{\P(\bar{a})}\mu_{\epsilon}(A),
    \end{align*}
    where it was used that, by construction, $\bar{\mu}$ and $\mu$ agree on $\mathcal{E}$. This concludes the proof.
\end{proof}

Since for $\P(\bar{a})\neq 0$, ${\mu}_{\bar{a}}$  is majorized by $\frac{1}{\P(\bar{a})} {\mu}_{\epsilon}$ and both of these measures are finite, the Radon-Nikodym theorem (in majorization case) can be applied to conclude that there exists a ${\mu}_\epsilon$-measurable\footnote{Recall that for $k>0$, a function $f$ is $\mu$-measurable if and only if it is $k\mu$-measurable, and $\int f d(k\mu) = k \int fd\mu$} function $h_{\bar{a}}$ with $0\leq h_{\bar{a}}\leq1$ almost everywhere with respect to $\mu_{\epsilon}$, such that
\begin{align}
    {\mu}_{\bar{a}}(A) = \frac{1}{\P(\bar{a})}\int_{A}h_{\bar{a}} d{\mu}_{\epsilon}.
\end{align}
This function $h_{\bar{a}}$ is uniquely determined up to sets of ${\mu}_{\epsilon}$-measure $0$. From now on, sets of measure zero and properties holding almost everywhere are always with respect to $\mu_{\epsilon}$. By defining
\begin{align*}
    \gamma(g_{\bar{a}}) :=  \frac{1}{\P(\bar{a})}h_{\bar{a}},
\end{align*}
it follows that 
\begin{align*}
    {\mu}_{\bar{a}}(A) = \int_{A}\gamma(g_{\bar{a}}) d{\mu}_{\epsilon},
\end{align*}
and $0\leq \gamma(g_{\bar{a}})\leq \frac{1}{\P(\bar{a})}$ almost everywhere. Moreover, $\gamma(g_{\bar{a}})$ is unique up to sets of measure $0$. Finally, observe that if $\P(\bar{a})=0$ then $\mu_{\bar{a}} \equiv 0$ and so by defining $\gamma(g_{\bar{a}}) \equiv 0$ it follows that
\begin{align*}
    \mu_{\bar{a}}(A) = \int_{A}\gamma(g_{\bar{a}}) d\mu_{\epsilon}.
\end{align*}
This density is unique up to sets of measure zero. As a consequence, the following holds.

\begin{theorem}
    For each $\bar{a}\in\O^{\ast}$, $\gamma(g_{\bar{a}})\in \mathcal{L}^{\infty}(\mu_{\epsilon})$.
\end{theorem}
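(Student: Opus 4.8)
The plan is to verify directly the two defining properties of membership in $\mathcal{L}^{\infty}(\mu_{\epsilon})$ --- measurability and essential boundedness --- since both are near-immediate consequences of the construction carried out immediately before the statement. Accordingly, the argument will be short, and the work splits naturally into the degenerate case $\P(\bar{a})=0$ and the main case $\P(\bar{a})\neq 0$.

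First I would dispose of the degenerate case. If $\P(\bar{a})=0$, then by definition $\gamma(g_{\bar{a}})\equiv 0$, which is trivially $\mu_{\epsilon}$-measurable and essentially bounded; hence it lies in $\mathcal{L}^{\infty}(\mu_{\epsilon})$ with essential supremum $0$. This case therefore requires no analysis beyond citing the definition of $\gamma(g_{\bar{a}})$.

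For the main case $\P(\bar{a})\neq 0$, I would argue as follows. Measurability holds because $\gamma(g_{\bar{a}})=\frac{1}{\P(\bar{a})}h_{\bar{a}}$ is a positive scalar multiple of the Radon--Nikodym derivative $h_{\bar{a}}$, and the Radon--Nikodym theorem (applied in the majorization case, as above) supplies $h_{\bar{a}}$ as a $\mu_{\epsilon}$-measurable function; scaling by the constant $\frac{1}{\P(\bar{a})}$ preserves measurability. Essential boundedness is precisely the content of the bound already established, namely $0\leq \gamma(g_{\bar{a}})\leq \frac{1}{\P(\bar{a})}$ $\mu_{\epsilon}$-almost everywhere. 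This a.e. bound shows that the essential supremum of $\gamma(g_{\bar{a}})$ is at most $\frac{1}{\P(\bar{a})}<\infty$, which is exactly the requirement for membership in $\mathcal{L}^{\infty}(\mu_{\epsilon})$.

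I do not expect a genuine obstacle here: all the analytic effort --- constructing the measures $\mu_{\bar{a}}$ via the pre-measure and the Hahn--Kolmogorov extension, establishing the majorization $\mu_{\bar{a}}\leq \frac{1}{\P(\bar{a})}\mu_{\epsilon}$, and invoking Radon--Nikodym to obtain $h_{\bar{a}}$ with $0\leq h_{\bar{a}}\leq 1$ a.e. --- has already been completed in the preceding lemmas. The statement merely records the observation that the uniform almost-everywhere bound $\frac{1}{\P(\bar{a})}$ renders $\gamma(g_{\bar{a}})$ essentially bounded. The only matter deserving any care is the bookkeeping that essential boundedness holds (with a finite norm) in both the degenerate and non-degenerate cases and that measurability is inherited from $h_{\bar{a}}$; neither point introduces a new idea.
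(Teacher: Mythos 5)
Your proposal is correct and follows essentially the same argument as the paper: split into the degenerate case $\P(\bar{a})=0$, where $\gamma(g_{\bar{a}})\equiv 0$, and the case $\P(\bar{a})\neq 0$, where the almost-everywhere bound $0\leq \gamma(g_{\bar{a}})\leq \frac{1}{\P(\bar{a})}$ from the Radon--Nikodym construction gives essential boundedness. Your explicit remark that measurability is inherited from $h_{\bar{a}}$ is a small point the paper leaves implicit, but it does not change the structure of the proof.
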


\begin{proof}
    If $\P(\bar{a})=0$ then $\gamma(g_{\bar{a}})\equiv 0$. While for $\P(\bar{a})\neq 0$, it holds that $\gamma(g_{\bar{a}}) \leq \frac{1}{\P(\bar{a})}$ almost everywhere. Therefore in both cases $\gamma(g_{\bar{a}})$ is uniformly bounded almost everywhere and so $\gamma(g_{\bar{a}})\in \mathcal{L}^{\infty}(\mu_{\epsilon})$
\end{proof}

 \begin{corollary}
     For each $q\in[1,\infty)$, $\gamma(g_{\bar{a}})\in \mathcal{L}^{q}(\mu_{\epsilon})$.
 \end{corollary}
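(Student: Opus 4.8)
The plan is to exploit the fact that $\mu_{\epsilon}$ is a \emph{finite} measure together with the essential boundedness of $\gamma(g_{\bar{a}})$ established in the preceding theorem. On a finite measure space the inclusion $\mathcal{L}^{\infty}\subseteq \mathcal{L}^{q}$ holds for every $q\in[1,\infty)$, and the corollary is precisely this inclusion applied to the particular function $\gamma(g_{\bar{a}})$. So the whole argument reduces to recording the right two facts and performing one elementary estimate.

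Concretely, I would first note that $\mu_{\epsilon}$ is finite: by construction $\mu_{\epsilon}(F)=1$, so $\mu_{\epsilon}$ is in fact a probability measure on $\sigma(\mathcal{E})$. Next, I would invoke the previous theorem, which shows $\gamma(g_{\bar{a}})\in\mathcal{L}^{\infty}(\mu_{\epsilon})$, to obtain an almost-everywhere bound $|\gamma(g_{\bar{a}})|\leq M$; explicitly one may take $M=\tfrac{1}{\P(\bar{a})}$ when $\P(\bar{a})\neq 0$ and $M=0$ when $\P(\bar{a})=0$.

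With these two facts in hand, the estimate is immediate: for any $q\in[1,\infty)$,
\begin{align*}
    \int_{F}|\gamma(g_{\bar{a}})|^{q}\, d\mu_{\epsilon} \leq \int_{F} M^{q}\, d\mu_{\epsilon} = M^{q}\,\mu_{\epsilon}(F) = M^{q} < \infty,
\end{align*}
from which $\gamma(g_{\bar{a}})\in\mathcal{L}^{q}(\mu_{\epsilon})$ follows by definition.

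There is no genuine obstacle here; the one point that must be in place is the finiteness of $\mu_{\epsilon}$, which is what guarantees that the constant bound $M$ is itself $q$-integrable over $F$. Were $\mu_{\epsilon}$ merely $\sigma$-finite this inclusion could fail, so the essential input is precisely that the total mass $\mu_{\epsilon}(F)$ is finite (indeed equal to $1$). Since the bound $M$ is uniform over $F$ in each case, the argument applies verbatim to every $\bar{a}\in\O^{\ast}$.
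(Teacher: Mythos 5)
Your proposal is correct and follows exactly the paper's own argument: the essential bound $\norm{\gamma(g_{\bar{a}})}_{\infty}$ from the preceding theorem combined with the finiteness of $\mu_{\epsilon}$ gives $\int_{F}|\gamma(g_{\bar{a}})|^{q}\,d\mu_{\epsilon}\leq \norm{\gamma(g_{\bar{a}})}_{\infty}^{q}\,\mu_{\epsilon}(F)<\infty$. Your additional remarks (that $\mu_{\epsilon}$ is in fact a probability measure, and that mere $\sigma$-finiteness would not suffice) are accurate but not needed beyond what the paper records.
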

\begin{proof}
    This follows from the above theorem since
    \begin{align*}
        \int_{F} |\gamma(g_{\bar{a}})|^{q} d\mu_{\epsilon} \leq \norm{\gamma(g_{\bar{a}})}_{\infty}^{q} \mu_{\epsilon}(F) <\infty.
    \end{align*}
\end{proof}

In particular, for each $\bar{a}\in \Lambda$ we have that $\gamma(g_{\bar{a}})\in\mathcal{L}^{2}(\mu_{\epsilon})$. Moreover, by the uniqueness part of the Radon-Nikodym Theorem, to each $\bar{a}\in\Lambda$ one can associate a unique element $[\gamma(g_{\bar{a}})]$ of $L^{2}(\mu_{\epsilon})$, where $L^{2}(\mu_{\epsilon})$ is the quotient of $\mathcal{L}^{2}(\mu_{\epsilon})$ by the equivalence relation that identifies functions that are equal almost everywhere. Thus $[\gamma(g_{\bar{a}})]$ is the equivalence class of densities which are the same almost everywhere. Uniqueness of $[\gamma(g_{\bar{a}})]$ makes the following map well-defined. Let $\phi:\mathcal{G}\to L^{2}(\mu_{\epsilon})$ be the linear map defined by
\begin{align*}
    \phi(g_{\bar{a}}) := [\gamma(g_{\bar{a}})]
\end{align*}
for $\bar{a}\in\Lambda$ and then extend linearly to other elements of $\mathcal{G}$. A representative of $\phi(g)$ will be denoted by $\gamma(g)$.

\begin{theorem}
    The linear map $\phi:\mathcal{G}\rightarrow L^{2}(\mu_{\epsilon})$ is an embedding and its definition does not depend on the choice of basis $\Lambda$. 
\end{theorem}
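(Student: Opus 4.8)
The plan is to reduce everything to a single identity: for every $\bar{b}\in\O^{\ast}$ one has $\phi(g_{\bar{b}})=[\gamma(g_{\bar{b}})]$, even when $\bar{b}\notin\Lambda$. Once this is available, basis-independence is immediate, and injectivity follows from the way the densities encode the values of $g$ on finite strings.

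First I would establish the measure-level identity underlying $\phi$. Fix $g=\sum_{\bar{c}\in\Lambda}\lambda_{\bar{c}}g_{\bar{c}}\in\mathcal{G}$, a finite sum in which every $\bar{c}$ satisfies $\P(\bar{c})\neq 0$ (otherwise $g_{\bar{c}}\equiv 0$ and could not lie in a basis). Since the basis expansion is an equality of functions on $\O^{\ast}$, one has $g(\bar{d})=\sum_{\bar{c}}\lambda_{\bar{c}}g_{\bar{c}}(\bar{d})$ for each $\bar{d}$, so from the definition of $\bar{\mu}$ the finite signed measure $\nu_{g}:=\sum_{\bar{c}}\lambda_{\bar{c}}\mu_{\bar{c}}$ on $\sigma(\mathcal{E})$ satisfies, on the algebra $\mathcal{E}$,
\[
  \nu_{g}\Big(\bigcup_{\bar{d}\in K}E(\bar{d})\Big)=\sum_{\bar{d}\in K}g(\bar{d}).
\]
I would then record two consequences. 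On the one hand, $\sum_{\bar{c}}\lambda_{\bar{c}}\gamma(g_{\bar{c}})$ is a density of $\nu_{g}$ with respect to $\mu_{\epsilon}$, since integrating it over any $A\in\sigma(\mathcal{E})$ gives $\sum_{\bar{c}}\lambda_{\bar{c}}\mu_{\bar{c}}(A)=\nu_{g}(A)$; hence $\phi(g)=[\sum_{\bar{c}}\lambda_{\bar{c}}\gamma(g_{\bar{c}})]$ is precisely the class of any density of $\nu_{g}$. On the other hand, taking $g=g_{\bar{b}}$, the displayed identity shows $\nu_{g_{\bar{b}}}$ and $\mu_{\bar{b}}$ agree on $\mathcal{E}$; splitting $\nu_{g_{\bar{b}}}$ into its positive and negative parts $\sum_{\bar{c}:\,\lambda_{\bar{c}}>0}\lambda_{\bar{c}}\mu_{\bar{c}}$ and $\sum_{\bar{c}:\,\lambda_{\bar{c}}<0}(-\lambda_{\bar{c}})\mu_{\bar{c}}$ (both finite positive measures) and invoking the uniqueness of the extension in \Cref{pre_to_full-thm}, the two agree on all of $\sigma(\mathcal{E})$. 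Thus $\mu_{\bar{b}}$ and $\nu_{g_{\bar{b}}}$ share the same density class, i.e. $\phi(g_{\bar{b}})=[\gamma(g_{\bar{b}})]$ for every $\bar{b}\in\O^{\ast}$. As this formula refers only to $\mu_{\bar{b}}$ and $\mu_{\epsilon}$ and not to $\Lambda$, any two bases of the required form induce linear maps that agree on the spanning set $\{g_{\bar{b}}\}_{\bar{b}\in\O^{\ast}}$, which yields basis-independence.

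For the embedding claim I would show $\phi$ is injective. Suppose $\phi(g)=0$ for $g=\sum_{\bar{c}}\lambda_{\bar{c}}g_{\bar{c}}$; then $\sum_{\bar{c}}\lambda_{\bar{c}}\gamma(g_{\bar{c}})=0$ almost everywhere, so $\nu_{g}(A)=0$ for all $A\in\sigma(\mathcal{E})$. Taking $A=E(\bar{b})$ for $\bar{b}\in\O^{k}$ and using the displayed identity gives $g(\bar{b})=\nu_{g}(E(\bar{b}))=0$; letting $k$ range over all positive integers yields $g\equiv 0$ on $\O^{+}$. It remains to treat the empty string: from $g_{\bar{c}}(\epsilon)=1=\sum_{b\in\O}\P(b\mid\bar{c})=\sum_{b\in\O}g_{\bar{c}}(b)$ and linearity (a finite outer sum against an absolutely convergent inner one) one obtains $g(\epsilon)=\sum_{b\in\O}g(b)=0$. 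Hence $g$ vanishes on all of $\O^{\ast}$, so $g=0$ in $\mathcal{G}$ and $\phi$ is injective.

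The main obstacle is the measure-level identity $\mu_{\bar{b}}=\nu_{g_{\bar{b}}}$: it holds trivially on the algebra $\mathcal{E}$, but promoting it to $\sigma(\mathcal{E})$ needs care, because $\nu_{g_{\bar{b}}}$ is only a \emph{signed} combination of measures, so I cannot directly apply the uniqueness of a positive pre-measure's extension. The clean route is the Jordan-type splitting above, which reduces the claim to the uniqueness statement of \Cref{pre_to_full-thm} for genuinely positive finite measures. A secondary subtlety, easily overlooked, is that the measures see $g$ only on $\O^{+}$, so injectivity genuinely requires the normalization identity at $\epsilon$ to rule out a surviving contribution in the empty-string coordinate.
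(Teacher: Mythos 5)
Your proof is correct, and its overall architecture coincides with the paper's: both arguments reduce the theorem to the single identity $\phi(g_{\bar{b}})=[\gamma(g_{\bar{b}})]$ for every $\bar{b}\in\O^{\ast}$, which in turn rests on showing that the signed measure $\nu_{g_{\bar{b}}}=\sum_{\bar{c}\in\Lambda}\lambda_{\bar{c}}\mu_{\bar{c}}$ agrees with $\mu_{\bar{b}}$ not just on $\mathcal{E}$ but on all of $\sigma(\mathcal{E})$, and both obtain injectivity by integrating the density of $\phi(g)$ over the cylinder sets $E(\bar{b})$ to recover the values $g(\bar{b})$. Where you genuinely diverge is the promotion step from $\mathcal{E}$ to $\sigma(\mathcal{E})$. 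The paper substitutes the algebra-level identity into the infimum (outer-measure) formula of \Cref{pre_to_full-thm} and then pulls the finite signed sum out of the infimum; since some coefficients may be negative, an infimum does not commute with a signed sum, so that manipulation is not justified as written (the conclusion is true, but the step needs an argument). Your Jordan-type splitting --- writing $\nu_{g_{\bar{b}}}=P-N$ with $P,N$ finite positive measures, noting that $\mu_{\bar{b}}+N$ and $P$ are finite measures whose restrictions to $\mathcal{E}$ are equal finite pre-measures, and invoking the uniqueness clause of the Hahn--Kolmogorov theorem --- repairs exactly this soft spot and is the cleaner route; it would be worth stating explicitly that you move $N$ to the other side before applying uniqueness, since uniqueness is only available for positive extensions. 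A second, smaller divergence: the paper's injectivity computation only reaches $\bar{b}\in\O^{+}$, because $E(\bar{b})$ is undefined for $\bar{b}=\epsilon$; your explicit recovery of $g(\epsilon)=\sum_{b\in\O}g(b)=0$ from the normalization of the $g_{\bar{c}}$ closes this gap, which the paper leaves implicit (one could equally use $g_{\bar{c}}(\epsilon)=\mu_{\bar{c}}(F)$ and integrate the density over $F$). In short: same strategy as the paper, but your treatment of the two delicate points is tighter than the paper's own.
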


\begin{proof}
    To prove basis independence, recall that a necessary condition for $g_{\bar{b}}$ to be part of a basis is that $\P(\bar{b})>0$, otherwise $g_{\bar{b}}\equiv 0$. Thus, take any $g_{\bar{b}}$ with $\P(\bar{b})>0$ and suppose that $g_{\bar{b}}=\sum_{\bar{a}\in \Lambda} \alpha_{\bar{a}}g_{\bar{a}}$. To prove basis independence, the following must be proven
    \begin{align*}
        [\gamma(g_{\bar{b}})] = \sum_{\bar{a}\in \Lambda} \alpha_{\bar{a}}[\gamma(g_{\bar{a}})].
    \end{align*}
    To this end\footnote{Recall that addition and multiplication on $L^{2}$ have the property that $[\alpha f+\alpha g]=\alpha[f]+\beta[g]$ for $\alpha,\beta\in\R$. Thus, to prove the above equality, it suffices to check that it holds almost everywhere for some representatives of the equivalence classes. This follows since if such almost everywhere equality is proven, the corresponding equivalence classes are the same and one can use $[f+\alpha g]=[f]+\alpha[g]$ to simplify the final answer into the desired form.}, first observe that writing $g_{\bar{b}}=\sum_{\bar{a}\in \Lambda} \alpha_{\bar{a}}g_{\bar{a}}$ and evaluating it at a set $A\in \mathcal{E}$ gives
    \begin{align*}
        \mu_{\bar{b}}(A)=\mu_{\bar{b}}\left(\bigcup_{\bar{c}\in K}E(\bar{c})\right)=\sum_{\bar{c}\in K}g_{\bar{b}}(\bar{c})=\sum_{\bar{c}\in K}\sum_{\bar{a}\in \Lambda} \alpha_{\bar{a}}g_{\bar{a}}(\bar{c}) = \sum_{\bar{a}\in \Lambda} \alpha_{\bar{a}}\sum_{\bar{c}\in K}g_{\bar{a}}(\bar{c})=\sum_{\bar{a}\in \Lambda} \alpha_{\bar{a}}\mu_{\bar{a}}(A),
    \end{align*}
    where to exchange the sums the fact that only finitely many $\alpha_{\bar{a}}$ are nonzero was used. This equality can be extended to all of $\sigma(\mathcal{E})$ since for $B\in \sigma(\mathcal{E})$, we have that
    \begin{align*}
        \mu_{\bar{b}}(B)  &=\inf\left\{\sum_{i=1}^{\infty}\bar{\mu}_{\bar{a}}(A_{i}) \mid B\subseteq \bigcup_{i=1}^{\infty}A_{i};\; A_{i}\in \mathcal{E} \text{ for each } i\in\mathbb{N}\right\} \\&= \inf\left\{\sum_{i=1}^{\infty}\sum_{\bar{a}\in \Lambda} \alpha_{\bar{a}}\bar{\mu}_{\bar{a}}(A_{i}) \mid B\subseteq \bigcup_{i=1}^{\infty}A_{i};\; A_{i}\in \mathcal{E} \text{ for each } i\in\mathbb{N}\right\} = \sum_{\bar{a}\in \Lambda} \alpha_{\bar{a}}\mu_{\bar{a}}(B),
    \end{align*}
    where it was used that $\bar{\mu}_{\bar{a}}$ and $\mu_{\bar{a}}$ agree on $\mathcal{E}$. Now, recall that
    \begin{align*}
        \mu_{\bar{b}}(B) = \int_{B}\gamma(g_{\bar{b}})d\mu_{\epsilon}.
    \end{align*}
    On the other hand, it also holds that
    \begin{align*}
        \mu_{\bar{b}}(B)=\sum_{\bar{a}\in \Lambda} \alpha_{\bar{a}}\mu_{\bar{a}}(B)  =\sum_{\bar{a}\in \Lambda} \alpha_{\bar{a}}\int_{B}\gamma(g_{\bar{a}})d\mu_{\epsilon}   =\int_{B}\sum_{\bar{a}\in \Lambda} \alpha_{\bar{a}}\gamma(g_{\bar{a}})d\mu_{\epsilon},
    \end{align*}
    where the linearity of the integral and that only finitely many $\alpha_{\bar{a}}$ are nonzero were used. By the uniqueness part of the Radon-Nikodym theorem, it follows that $\gamma(g_{\bar{b}}) = \sum_{\bar{a}\in \Lambda} \alpha_{\bar{a}}\gamma(g_{\bar{a}})$ almost everywhere. This proves the claim that 
    \begin{align*}
        [\gamma(g_{\bar{b}})] = \sum_{\bar{a}\in \Lambda} \alpha_{\bar{a}}[\gamma(g_{\bar{a}})].
    \end{align*}
    Hence $\phi(g_{\bar{b}})=[\gamma(g_{\bar{b}})]$ for any element $\bar{b}\in \O^{\ast}$ with $\P(\bar{b})>0$. Consequently, the definition of $\phi$ does not depend on the particular choice of basis $\Lambda$. What remains to be shown is that $\phi$ is injective. So suppose $\phi(g)=0$ for $g=\sum_{\bar{a}\in \Lambda}\alpha_{\bar{a}}g_{\bar{a}}$, hence
    \begin{align*}
        \phi(g)=\sum_{\bar{a}\in \Lambda} \alpha_{\bar{a}}[\gamma(g_{\bar{a}})] = 0.
    \end{align*}
    Picking a representative of each basis element gives
    \begin{align*}
        \sum_{\bar{a}\in \Lambda} \alpha_{\bar{a}}\gamma(g_{\bar{a}}) = 0
    \end{align*}
    almost everywhere. From this, it follows that
    \begin{align*}
        g(\bar{b}) = \sum_{\bar{a}\in \Lambda} \alpha_{\bar{a}}g_{\bar{a}}(\bar{b}) = \sum_{\bar{a}\in \Lambda} \alpha_{\bar{a}}\mu_{\bar{a}}(E(\bar{b})) =  \sum_{\bar{a}\in \Lambda} \alpha_{\bar{a}}\int_{E(\bar{b})}\gamma(g_{\bar{a}})d\mu_{\epsilon} = \int_{E(\bar{b})}\sum_{\bar{a}\in \Lambda} \alpha_{\bar{a}}\gamma(g_{\bar{a}})d\mu_{\epsilon} = 0.
    \end{align*}
     Since $\bar{b}$ was an arbitrary element, it must be that $g \equiv 0$ and hence $\phi$ is an embedding.
\end{proof}

 Since $\phi(\mathcal{G})$ is a linear subspace of the Hilbert space $L^{2}(\mu_{\epsilon})$, it inherits its inner product structure which, in turn, can be endowed to $\mathcal{G}$ using its identification with $\phi(\mathcal{G})$. To be explicit, there is the following result.
 \begin{theorem}
     For $g_{1}= \sum_{\bar{a}\in \Lambda} \alpha_{\bar{a}}g_{\bar{a}}\in \mathcal{G}$ and $g_{2}=\sum_{\bar{b}\in \Lambda} \beta_{\bar{b}}g_{\bar{b}}\in\mathcal{G}$, the map 
     \begin{align*}
         \langle g_{1}, g_{2} \rangle := \langle \phi(g_{1}), \phi(g_{2}) \rangle = \int_{F} \left(\sum_{\bar{a}\in \Lambda} \alpha_{\bar{a}}\gamma(g_{\bar{a}})\right)\left(\sum_{\bar{b}\in \Lambda} \beta_{\bar{b}}\gamma(g_{\bar{b}})\right)d\mu_{\epsilon}
     \end{align*}
     defines an inner product on $\mathcal{G}$.
 \end{theorem}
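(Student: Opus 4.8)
The plan is to recognize this statement as the standard transfer (pullback) of an inner product along the injective linear map $\phi$. Since $\phi:\mathcal{G}\to L^{2}(\mu_{\epsilon})$ is a linear embedding by the previous theorem, and $L^{2}(\mu_{\epsilon})$ carries a genuine inner product $\langle\cdot,\cdot\rangle$, the composite $\langle g_{1}, g_{2}\rangle := \langle\phi(g_{1}),\phi(g_{2})\rangle$ will automatically inherit symmetry and bilinearity, while injectivity of $\phi$ upgrades positive semidefiniteness to positive definiteness. Essentially no new analytic input is needed beyond the facts already established.

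First I would confirm well-definedness together with the displayed integral formula. By linearity of $\phi$ one has $\phi(g_{1}) = \sum_{\bar{a}\in\Lambda}\alpha_{\bar{a}}[\gamma(g_{\bar{a}})]$, so a representative of $\phi(g_{1})$ is the function $\sum_{\bar{a}\in\Lambda}\alpha_{\bar{a}}\gamma(g_{\bar{a}})$, a finite linear combination of $\mathcal{L}^{2}(\mu_{\epsilon})$ functions and hence itself in $\mathcal{L}^{2}(\mu_{\epsilon})$; the same holds for $g_{2}$. The $L^{2}$ inner product of the two classes is by definition $\int_{F}$ of the product of any choice of representatives — finite by Cauchy--Schwarz and independent of that choice — which is precisely the integral written in the statement. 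Thus the formula is meaningful and agrees with $\langle\phi(g_{1}),\phi(g_{2})\rangle$.

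Next I would verify the three axioms. Symmetry $\langle g_{1}, g_{2}\rangle = \langle g_{2}, g_{1}\rangle$ is immediate from symmetry of the $L^{2}$ inner product. Linearity in the first slot, $\langle\lambda g + g', h\rangle = \lambda\langle g, h\rangle + \langle g', h\rangle$, follows by composing the linearity of $\phi$ with the linearity of $\langle\cdot,\cdot\rangle$ in its first argument on $L^{2}(\mu_{\epsilon})$; combined with symmetry this yields full bilinearity. Positive semidefiniteness is clear since $\langle g, g\rangle = \langle\phi(g),\phi(g)\rangle = \norm{\phi(g)}_{2}^{2}\geq 0$.

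The only step that requires more than a formal transport of structure — and hence the crux of the argument — is definiteness. I must show that $\langle g, g\rangle = 0$ forces $g = 0$. Here $\langle g, g\rangle = \norm{\phi(g)}_{2}^{2} = 0$ gives $\phi(g) = 0$ in $L^{2}(\mu_{\epsilon})$, and because $\phi$ is an embedding (injective) by the previous theorem, this yields $g = 0$ in $\mathcal{G}$. This is the single place where the injectivity of $\phi$ is indispensable: all the remaining axioms would hold for an arbitrary linear map into an inner product space, whereas definiteness is exactly what the embedding property buys.
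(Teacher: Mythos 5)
Your proposal is correct and takes exactly the approach the paper intends: the paper states this theorem without a separate proof, justifying it only by the preceding remark that $\phi(\mathcal{G})$ inherits the inner product of $L^{2}(\mu_{\epsilon})$ and is identified with $\mathcal{G}$ via the embedding $\phi$. Your write-up simply fills in the routine verification (well-definedness, bilinearity, symmetry, and definiteness from injectivity of $\phi$) that the paper leaves implicit, and correctly isolates injectivity as the one non-formal ingredient.
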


\section{Approximation Theory}

Note that the inner product on $\mathcal{G}$ is greatly implicit because the functions $\gamma(g)$ are constructed using the Radon-Nikodym theorem. Fortunately, both the functions $\gamma(g)$ and the inner product can be computed using the following results.

\begin{theorem} \label{approx_thm}Suppose that for all $\bar{c}\in\mathcal{O}^{\ast}$ and almost-all $\tilde{a}\in F$ the limit 
\begin{align*}
    \lim_{n\to\infty} \frac{g_{\bar{c}}(\tilde{a}^{k})}{g_{\epsilon}(\tilde{a}^{k})}
\end{align*}
exists. Then for $g,g_{1},g_{2}\in\mathcal{G}$, $\tilde{b}\in F$ and $\bar{c},\bar{d}\in\O^{\ast}$ the following hold:
    \begin{enumerate}[label=(\roman*)]
        \item The function $\gamma(g)$ can be computed almost everywhere by 
        \[\gamma(g)(\tilde{b})=\lim_{k\to\infty}\frac{g(\tilde{b}^{k})}{g_{\epsilon}(\tilde{b}^{k})}.\]
        \item 
        \[
            \innp{g_{1}}{g_{2}} = \lim_{n\to \infty}\sum_{\bar{a}\in\O^{n}} \frac{g_{1}(\bar{a})g_{2}(\bar{a})}{g_{\epsilon}(\bar{a})}.
        \]
        \item If $P(\bar{c})\neq 0$ and $\P(\bar{d})\neq 0$ then
        \[
            \innp{g_{\bar{c}}}{g_{\bar{d}}} = \frac{1}{\sigma(t_{\bar{c}}g_{\epsilon})\sigma(t_{\bar{d}}g_{\epsilon})}\lim_{n\to \infty}\sum_{\bar{a}\in\O^{n}} \frac{\sigma(t_{\bar{c}\bar{a}}g_{\epsilon})\sigma(t_{\bar{d}\bar{a}}g_{\epsilon})}{\sigma(t_{\bar{a}}g_{\epsilon})},
       \]
        where for $\bar{b}=b_{1}b_{2}\ldots b_{k}\in\O^{k}$ the operator $t_{\bar{b}}$ is defined to be the composition $t_{b_{k}}\ldots t_{b_{2}}t_{b_{1}}$.
    \end{enumerate}
\end{theorem}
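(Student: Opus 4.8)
The plan is to read each ratio $g(\tilde b^{k})/g_{\epsilon}(\tilde b^{k})$ as a local average of the density $\gamma(g)$ over the shrinking cylinder $E(\tilde b^{k})$. For fixed $k$ the sets $\{E(\bar b)\}_{\bar b\in\O^{k}}$ partition $F$, and these partitions refine as $k$ grows. For a basis function $g_{\bar a}$, recall $\mu_{\bar a}(E(\bar b))=g_{\bar a}(\bar b)$ and $\mu_{\epsilon}(E(\bar b))=g_{\epsilon}(\bar b)$, so the simple function $f_{k}$ taking the value $g_{\bar a}(\bar c)/g_{\epsilon}(\bar c)$ on each level-$k$ atom $E(\bar c)$ is exactly $f_{k}(\tilde b)=\mu_{\bar a}(E(\tilde b^{k}))/\mu_{\epsilon}(E(\tilde b^{k}))$, the $\mu_{\epsilon}$-average of $\gamma(g_{\bar a})$ over $E(\tilde b^{k})$ (well defined off the $\mu_{\epsilon}$-null set where some $g_{\epsilon}(\tilde b^{k})=0$).

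For part (i) I would first prove the claim for a single basis function $g_{\bar a}$ and then extend by linearity. The key computation is that for every cylinder $E(\bar b)$ with $\bar b\in\O^{m}$ and every $k\ge m$,
\[
\int_{E(\bar b)}f_{k}\,d\mu_{\epsilon}=\sum_{\substack{\bar c\in\O^{k}\\ E(\bar c)\subseteq E(\bar b)}}\frac{g_{\bar a}(\bar c)}{g_{\epsilon}(\bar c)}\,\mu_{\epsilon}(E(\bar c))=\sum_{\substack{\bar c\in\O^{k}\\ E(\bar c)\subseteq E(\bar b)}}\mu_{\bar a}(E(\bar c))=\mu_{\bar a}(E(\bar b)),
\]
using $\mu_{\epsilon}(E(\bar c))=g_{\epsilon}(\bar c)$ and countable additivity of $\mu_{\bar a}$. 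Because $\gamma(g_{\bar a})\in\L^{\infty}(\mu_{\epsilon})$, the averages satisfy $|f_{k}|\le\norm{\gamma(g_{\bar a})}_{\infty}$, and $\mu_{\epsilon}$ is finite; so the hypothesised pointwise limit $f:=\lim_{k}f_{k}$ lies in $\L^{\infty}$ and bounded convergence yields $\int_{E(\bar b)}f\,d\mu_{\epsilon}=\mu_{\bar a}(E(\bar b))$ for every cylinder. Thus the finite measures $A\mapsto\int_{A}f\,d\mu_{\epsilon}$ and $\mu_{\bar a}$ agree on the algebra $\mathcal E$, hence on $\sigma(\mathcal E)$ by the uniqueness in \Cref{pre_to_full-thm}, and uniqueness of the Radon--Nikodym derivative forces $f=\gamma(g_{\bar a})$ almost everywhere. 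For a general $g=\sum_{\bar a\in\Lambda}\alpha_{\bar a}g_{\bar a}$ the ratio is the corresponding finite linear combination, so the limit exists a.e.\ and equals $\sum_{\bar a}\alpha_{\bar a}\gamma(g_{\bar a})=\gamma(g)$.

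For part (ii), setting $f_{n}^{(i)}(\tilde b):=g_{i}(\tilde b^{n})/g_{\epsilon}(\tilde b^{n})$, the product $f_{n}^{(1)}f_{n}^{(2)}$ is constant on each level-$n$ atom, so
\[
\int_{F}f_{n}^{(1)}f_{n}^{(2)}\,d\mu_{\epsilon}=\sum_{\bar a\in\O^{n}}\frac{g_{1}(\bar a)g_{2}(\bar a)}{g_{\epsilon}(\bar a)^{2}}\,\mu_{\epsilon}(E(\bar a))=\sum_{\bar a\in\O^{n}}\frac{g_{1}(\bar a)g_{2}(\bar a)}{g_{\epsilon}(\bar a)}.
\]
By part (i) the factors converge a.e.\ to $\gamma(g_{i})$, so the product converges a.e.\ to $\gamma(g_{1})\gamma(g_{2})$ and is uniformly bounded by $\norm{\gamma(g_{1})}_{\infty}\norm{\gamma(g_{2})}_{\infty}$ on the finite space $F$; bounded convergence then gives the limit $\int_{F}\gamma(g_{1})\gamma(g_{2})\,d\mu_{\epsilon}=\innp{g_{1}}{g_{2}}$. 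Part (iii) is the specialization $g_{1}=g_{\bar c}$, $g_{2}=g_{\bar d}$: from the definition of $g_{\bar c}$ and \Cref{OOM_Fun - thm} one has $g_{\bar c}(\bar a)=\P(\bar a\mid\bar c)=\sigma(t_{\bar c\bar a}g_{\epsilon})/\sigma(t_{\bar c}g_{\epsilon})$, likewise for $\bar d$, and $g_{\epsilon}(\bar a)=\sigma(t_{\bar a}g_{\epsilon})$; substituting into (ii) and pulling the constant $1/(\sigma(t_{\bar c}g_{\epsilon})\sigma(t_{\bar d}g_{\epsilon}))$ out of the sum and the limit gives the stated formula.

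The main obstacle is the interchange of limit and integral together with the a.e.\ identification in parts (i) and (ii): this is precisely where the hypothesis is used, to guarantee that the pointwise limit exists, and where one must lean on $\gamma(g)\in\L^{\infty}(\mu_{\epsilon})$ and the finiteness of $\mu_{\epsilon}$ to dominate. Care is also needed with the $\mu_{\epsilon}$-null set of sequences $\tilde b$ for which some $g_{\epsilon}(\tilde b^{k})=0$, where the ratios are undefined and are simply discarded. Observing that $f_{k}$ is the conditional expectation of $\gamma(g)$ given the $\sigma$-algebra generated by the level-$k$ cylinders, the a.e.\ limit would follow directly from the martingale convergence theorem and the hypothesis would become automatic; the argument above keeps the machinery to a minimum.
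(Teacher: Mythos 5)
Your proposal is correct and follows the paper's overall skeleton --- simple functions constant on level-$k$ cylinders, a convergence theorem, agreement of two finite measures on $\mathcal{E}$, then Hahn--Kolmogorov uniqueness and Radon--Nikodym uniqueness --- but the way you establish the key measure identity is genuinely cleaner. The paper identifies $B\mapsto\int_{B}f\,d\mu_{\epsilon}$ with $\mu_{\bar{c}}$ by computing $\lim_{k}\sum_{\bar{a}\in\O^{k}}\frac{g_{\bar{c}}(\bar{a})}{g_{\epsilon}(\bar{a})}\mu_{\epsilon}(E(\bar{a})\cap B)$ via the chain rule $\P(\bar{b}\bar{w}\mid\bar{c})=\P(\bar{w}\mid\bar{c}\bar{b})\P(\bar{b}\mid\bar{c})$ and a resummation; you instead observe the \emph{exact} identity $\int_{E(\bar{b})}f_{k}\,d\mu_{\epsilon}=\mu_{\bar{a}}(E(\bar{b}))$ for every $k\geq m$ (the tower property), which follows from the countable additivity of $\mu_{\bar{a}}$ already established in \Cref{premeasure} and \Cref{pre_to_full-thm}, so no limit computation is needed before passing to the limit in $k$. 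Your domination is by $\norm{\gamma(g)}_{\infty}$ (cylinder averages are bounded by the essential supremum) rather than the paper's explicit probability bounds $1/\P(\bar{c})$ and $G_{12}$; both are valid and of the same size. Two remarks. First, a small point you should make explicit: your middle equality $\sum_{\bar{c}}\frac{g_{\bar{a}}(\bar{c})}{g_{\epsilon}(\bar{c})}\mu_{\epsilon}(E(\bar{c}))=\sum_{\bar{c}}\mu_{\bar{a}}(E(\bar{c}))$ silently uses that $g_{\epsilon}(\bar{c})=0$ forces $\mu_{\bar{a}}(E(\bar{c}))=0$ (so the discarded null atoms contribute nothing on either side); this follows from $\P(\bar{a}\bar{c})\leq\P(\bar{c})$, equivalently from the majorization lemma. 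Second, your closing observation is a genuine strengthening that the paper misses: since $f_{k}$ is the conditional expectation of $\gamma(g_{\bar{a}})$ with respect to the increasing filtration generated by the level-$k$ cylinders, and these generate $\sigma(\mathcal{E})$, the martingale convergence theorem makes the existence of the almost-everywhere limit automatic, so the standing hypothesis of \Cref{approx_thm} could be removed altogether rather than assumed.
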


\begin{proof}
    \begin{enumerate}[label=(\roman*)]
    \item Take $\bar{c}\in \Lambda$ and define function $f$ to be
    \begin{align*}
        f(\tilde{a}) := \lim_{n\to\infty} \frac{g_{\bar{c}}(\tilde{a}^{k})}{g_{\epsilon}(\tilde{a}^{k})},
    \end{align*}
    wherever the limit exists and $f(\tilde{a}):=0$ otherwise. Observe that the following holds almost everywhere
    \begin{align*}
        f(\tilde{b}) &= \lim_{k\to \infty}  \frac{g_{\bar{c}}(\tilde{b}^{k})}{g_{\epsilon}(\tilde{b}^{k})}= \lim_{k\to \infty} \sum_{\bar{a}\in \O^{k}} \frac{g_{\bar{c}}(\bar{a})}{g_{\epsilon}(\bar{a})} \mathds{1}_{E(\bar{a})} (\tilde{b}).
    \end{align*}
    Thus it follows that
    \begin{align*}
        f_{k}(\tilde{b}) :=\sum_{\bar{a}\in \O^{k}} \frac{g_{\bar{c}}(\bar{a})}{g_{\epsilon}(\bar{a})} \mathds{1}_{E(\bar{a})} (\tilde{b}) \xrightarrow{k\to\infty} f(\tilde{b})
    \end{align*}
    almost everywhere and $f_{k}$ are integrable since they're simple functions. Next, observe that
    \begin{align*}
        |f_{k}(\tilde{b})| &= \left| \sum_{\bar{a}\in \O^{k}} \frac{g_{\bar{c}}(\bar{a})}{g_{\epsilon}(\bar{a})} \mathds{1}_{E(\bar{a})} (\tilde{b})\right| = \left| \frac{g_{\bar{c}}(\tilde{b}^{k})}{g_{\epsilon}(\tilde{b}^{k})} \right| = \frac{\P(\bar{c}\tilde{b}^{k})}{\P(\tilde{b}^{k})\P(\bar{c})} ,
    \end{align*}
    can be bounded by
    \begin{align*}
        |f_{k}(\tilde{b})|&\leq  \frac{1}{\P(\bar{c})} =: G,
    \end{align*}
    which is finite since $g_{\bar{c}}$ is a basis element and so $\P(\bar{c})\neq 0$. Since $G$ is a constant and $\mu_{\epsilon}$ is finite, it follows that $G$, as a constant function, is integrable. As $G$ dominates $f_{k}$'s, the Dominated Convergence Theorem can be applied to conclude that $f$ is integrable and for any $B\in \sigma(\mathcal{E})$
    \begin{align*}
        \int_{B} f d\mu_{\epsilon} &= \lim_{k\to \infty} \int_{B}f_{k}d\mu_{\epsilon}.
    \end{align*}
    Since $f$ is nonnegative, it follows that
    \begin{align*}
        \mu(B) := \int_{B} f d\mu_{\epsilon} 
    \end{align*}
    defines a measure on $\sigma(\mathcal{E})$. Now suppose $B\in \mathcal{E}$ and apply the definition of $f_{k}$ to get
    \begin{align*}
         \mu(B) = \int_{B} f d\mu_{\epsilon} &=\lim_{k\to \infty}  \int_{B}\sum_{\bar{a}\in \O^{k}} \frac{g_{\bar{c}}(\bar{a})}{g_{\epsilon}(\bar{a})} \mathds{1}_{E(\bar{a})}d\mu_{\epsilon}=\lim_{k\to \infty} \sum_{\bar{a}\in \O^{k}} \frac{g_{\bar{c}}(\bar{a})}{g_{\epsilon}(\bar{a})} \mu_{\epsilon}(E(\bar{a}) \cap B).
    \end{align*}
    But, for $B\in \mathcal{E}$ it holds that
    \begin{align*}
         \mu(B) =\lim_{k\to \infty} \sum_{\bar{a}\in \O^{k}} \frac{g_{\bar{c}}(\bar{a})}{g_{\epsilon}(\bar{a})} \mu_{\epsilon}\left(E(\bar{a}) \cap \bigcup_{\bar{b}\in K} E(\bar{b})\right)=\lim_{k\to \infty} \sum_{\bar{a}\in \O^{k}} \frac{g_{\bar{c}}(\bar{a})}{g_{\epsilon}(\bar{a})} \mu_{\epsilon}\left( \bigcup_{\bar{b}\in K} E(\bar{a}) \cap E(\bar{b})\right).
    \end{align*}
    For $k>n$ where $n$ is such that $K\subseteq \mathcal{O}^{n}$, it holds that 
    \begin{align*}
        E(\bar{a}) \cap E(\bar{b}) =\begin{cases}
            E(\bar{a}) & \text{if } \bar{a}=\bar{b}\bar{w} \text{ for some sequence } \bar{w}\\
            \emptyset &\text{otherwise.}
        \end{cases}
    \end{align*}
    Therefore, using disjointedness  of $E(\bar{b})$'s, it follows that
     \begin{align*}
         \mu(B) =\lim_{k\to \infty} \sum_{\substack{\bar{a}\in \O^{k+n}: \\ \bar{a}=\bar{b}\bar{w}\\\text{for some } \bar{b}\in K}} \frac{g_{\bar{c}}(\bar{a})}{g_{\epsilon}(\bar{a})} \mu_{\epsilon}\left(E(\bar{a})\right)= \lim_{k\to \infty} \sum_{\bar{b}\in K} \sum_{\substack{\bar{w}\in \O^{k}}} g_{\bar{c}}(\bar{b}\bar{w})= \lim_{k\to \infty} \sum_{\bar{b}\in K} \sum_{\substack{\bar{w}\in \O^{k}}} \P(\bar{b}\bar{w}\mid \bar{c}).
    \end{align*}
    Using that $\P(\bar{b}\bar{w}\mid \bar{c}) = \P(\bar{w}\mid\bar{c}\bar{b})\P(\bar{b}\mid \bar{c})$ gives
    \begin{align*}
         \mu(B)  = \lim_{k\to \infty} \sum_{\bar{b}\in K} \sum_{\substack{\bar{w}\in \O^{k}}} \P(\bar{w}\mid\bar{c}\bar{b})\P(\bar{b}\mid \bar{c}) = \sum_{\bar{b}\in K}\P(\bar{b}\mid \bar{c})=\sum_{\bar{b}\in K}g_{\bar{c}}(\bar{b}) = \mu_{\bar{c}}(B).
    \end{align*}
    Since are both $\mu$ and $\mu_{\bar{c}}$ are finite measures on $\sigma(\mathcal{E})$ that agree on the generating algebra $\mathcal{E}$, it follows by uniqueness of the Hahn-Kolmogorov extension theorem that $\mu=\mu_{\bar{c}}$. Consequently, for any $A\in \sigma(\mathcal{E})$
    \begin{align*}
        \mu_{\bar{c}}(A) = \mu(A) = \int_{A} f d\mu_{\epsilon}.
    \end{align*}
    From the uniqueness of the Radon-Nikodym theorem, it follows that
    \begin{align*}
        \gamma(g_{\bar{c}})(\tilde{a}) = f(\tilde{a}) =\lim_{n\to\infty} \frac{g_{\bar{c}}(\tilde{a}^{k})}{g_{\epsilon}(\tilde{a}^{k})}
    \end{align*}
    almost everywhere. Finally, take $g\in \mathcal{G}$ and write it as $g= \sum_{\bar{a}\in \Lambda}\alpha_{\bar{a}}g_{\bar{a}}$ then
    \begin{align*}
        \gamma(g)(\tilde{b}) = \sum_{\bar{a}\in\Lambda}\alpha_{\bar{a}}\gamma(g_{\bar{a}})(\tilde{b})= \sum_{\bar{a}\in\Lambda}\alpha_{\bar{a}}\lim_{k\to\infty}\frac{g_{\bar{a}}(\tilde{b}^{k})}{g_{\epsilon}(\tilde{b}^{k})}= \lim_{k\to\infty}\frac{\sum_{\bar{a}\in\Lambda}\alpha_{\bar{a}}g_{\bar{a}}(\tilde{b}^{k})}{g_{\epsilon}(\tilde{b}^{k})}= \lim_{k\to\infty}\frac{g(\tilde{b}^{k})}{g_{\epsilon}(\tilde{b}^{k})},
    \end{align*}
    where to interchange the sum and the limit it was used that only finitely many $\alpha_{\bar{a}}$ are nonzero. Since each limit exists almost everywhere and only finitely many of them are summed, it follows that the above equality holds almost everywhere.

    \item To prove this result, the same strategy is followed as above. First, using the previous result, observe that the following holds almost everywhere
    \begin{align*}
        \gamma(g_{1})(\tilde{b})\gamma(g_{2})(\tilde{b}) &= \lim_{k\to \infty}  \frac{g_{1}(\tilde{b}^{k})g_{2}(\tilde{b}^{k})}{g_{\epsilon}(\tilde{b}^{k})^{2}}= \lim_{k\to \infty} \sum_{\bar{a}\in \O^{k}} \frac{g_{1}(\bar{a})g_{2}(\bar{a})}{g_{\epsilon}(\bar{a})^{2}} \mathds{1}_{E(\bar{a})} (\tilde{b}).
    \end{align*}
    Thus it follows that
    \begin{align*}
        f_{k}(\tilde{b}) :=\sum_{\bar{a}\in \O^{k}} \frac{g_{1}(\bar{a})g_{2}(\bar{a})}{g_{\epsilon}(\bar{a})^{2}} \mathds{1}_{E(\bar{a})} (\tilde{b}) \xrightarrow{k\to\infty} (\gamma(g_{1})\gamma(g_{2}))(\tilde{b})
    \end{align*}
    almost everywhere and $f_{k}$ are integrable since they're simple functions. Next, observe that
    \begin{align*}
        |f_{k}(\tilde{b})| &= \left| \sum_{\bar{a}\in \O^{k}} \frac{g_{1}(\bar{a})g_{2}(\bar{a})}{g_{\epsilon}(\bar{a})^{2}} \mathds{1}_{E(\bar{a})} (\tilde{b})\right| = \left| \frac{g_{1}(\tilde{b}^{k})g_{2}(\tilde{b}^{k})}{g_{\epsilon}(\tilde{b}^{k})^{2}} \right|.
    \end{align*}
    Writing $g_{1}=\sum_{\bar{a}\in\Lambda}\alpha_{\bar{a}}g_{\bar{a}}$ and $g_{2}=\sum_{\bar{c}\in \Lambda}\beta_{\bar{c}}g_{\bar{c}}$ gives
    \begin{align*}
         |f_{k}(\tilde{b})|&= \left| \frac{\sum_{\bar{a}\in\Lambda}\alpha_{\bar{a}}g_{\bar{a}}(\tilde{b}^{k})}{g_{\epsilon}(\tilde{b}^{k})} \right|\left| \frac{\sum_{\bar{c}\in\Lambda}\beta_{\bar{c}}g_{\bar{c}}(\tilde{b}^{k})}{g_{\epsilon}(\tilde{b}^{k})} \right|= \left| \frac{\sum_{\bar{a}\in\Lambda}\alpha_{\bar{a}}\P(\bar{a}\tilde{b}^{k} )/\P(\bar{a})}{\P(\tilde{b}^{k})} \right|\left| \frac{\sum_{\bar{c}\in\Lambda}\beta_{\bar{c}}\P(\bar{c}\tilde{b}^{k})/\P(\bar{c})}{\P(\tilde{b}^{k})} \right|.
    \end{align*}
    Now the triangle inequality can be applied to obtain
    \begin{align*}
         |f_{k}(\tilde{b})|&\leq  \frac{\sum_{\bar{a}\in\Lambda}\left|\alpha_{\bar{a}} \right|\P(\bar{a}\tilde{b}^{k})/\P(\bar{a})}{\P(\tilde{b}^{k})} \frac{\sum_{\bar{c}\in\Lambda}\left| \beta_{\bar{c}}\right|\P(\bar{c}\tilde{b}^{k})/\P(\bar{c})}{\P(\tilde{b}^{k})}
    \end{align*}
    Finally, using that $\P(\bar{a}\tilde{b}^{k}) \leq \P(\tilde{b}^{k})$ gives
    \begin{align*}
        |f_{k}(\tilde{b})|&\leq  \sum_{\bar{a}\in\Lambda}\frac{\left|\alpha_{\bar{a}} \right|}{g_{\epsilon}(\bar{a})} \sum_{\bar{c}\in\Lambda}\frac{\left| \beta_{\bar{c}}\right|}{g_{\epsilon}(\bar{a})} =: G_{12},
    \end{align*}
    which is finite since there are only finitely many nonzero coefficients $\alpha_{\bar{a}},\beta_{\bar{a}}$. Since $G_{12}$ is a constant and $\mu_{\epsilon}$ is finite, it follows that $G_{12}$ --- viewed as a constant function --- is integrable. As $G_{12}$ dominates $f_{k}$'s, the Dominated Convergence Theorem can be applied to conclude that
    \begin{align*}
        \innp{g_{1}}{g_{2}}=\int_{F} \gamma(g_{1})\gamma(g_{2})d\mu_{\epsilon} &= \lim_{k\to \infty} \int_{F}f_{k}d\mu_{\epsilon}.
    \end{align*}
    Finally, applying the definition of $f_{k}$ gives
    \begin{align*}
        \innp{g_{1}}{g_{2}}&=\lim_{k\to \infty}  \int_{F}\sum_{\bar{a}\in \O^{k}} \frac{g_{1}(\bar{a})g_{2}(\bar{a})}{g_{\epsilon}(\bar{a})^{2}} \mathds{1}_{E(\bar{a})}d\mu_{\epsilon}=\lim_{k\to \infty} \sum_{\bar{a}\in \O^{k}} \frac{g_{1}(\bar{a})g_{2}(\bar{a})}{g_{\epsilon}(\bar{a})^{2}} \mu_{\epsilon}(E(\bar{a}))=\lim_{k\to \infty} \sum_{\bar{a}\in \O^{k}} \frac{g_{1}(\bar{a})g_{2}(\bar{a})}{g_{\epsilon}(\bar{a})},
    \end{align*}
    which concludes the proof.
    \item Applying the results from (ii) and applying the definition of $g_{\bar{a}}$ yields 
    \begin{align*}
        \innp{g_{\bar{c}}}{g_{\bar{d}}} &:= \lim_{k\to \infty} \sum_{\bar{a}\in \O^{k}} \frac{g_{\bar{c}}(\bar{a})g_{\bar{d}}(\bar{a})}{g_{\epsilon}(\bar{a})}
        = \lim_{k\to \infty} \sum_{\bar{a}\in \O^{k}} \frac{\P(\bar{c}\bar{a})\P(\bar{d}\bar{a})}{\P(\bar{a})\P(\bar{c})\P(\bar{d})}.
    \end{align*}
    Using \Cref{OOM_Fun - thm} to write $\P(\bar{b})$ as $\sigma(t_{\bar{b}}g_{\epsilon})$ allows us to conclude that
    \begin{align*}
                \innp{g_{\bar{c}}}{g_{\bar{d}}} &=\frac{1}{\sigma(t_{\bar{c}}g_{\epsilon})\sigma(t_{\bar{d}}g_{\epsilon})}\lim_{k\to \infty} \sum_{\bar{a}\in \O^{k}} \frac{\sigma(t_{\bar{c}\bar{a}}g_{\epsilon})\sigma(t_{\bar{d}\bar{a}}g_{\epsilon})}{\sigma(t_{\bar{a}}g_{\epsilon})}.
    \end{align*}
       \end{enumerate}
\end{proof}

Under the assumptions of the preceding theorem, the observable operators $t_{a}$ are continuous with respect to the norm induced by the inner product. To prove this statement the following lemma will be needed.

\begin{lemma}\label{crucial-lemma}
    For any $g\in\mathcal{G}, a\in \mathcal{O}$ and $\bar{w}\in \mathcal{O}^{\ast}$, it holds that $(t_{a}g)(\bar{w})=g(a\bar{w})$.
\end{lemma}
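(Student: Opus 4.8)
The plan is to exploit linearity to reduce the identity to the spanning functions $g_{\bar{c}}$ and then simply unwind the definitions, the only real work being a careful treatment of conditioning on zero-probability sequences. Since evaluation at a fixed argument $\bar{w}$ is a linear functional on $\mathcal{G}$ and $t_{a}$ is linear, both sides of the claimed equality are linear in $g$. Writing an arbitrary $g\in\mathcal{G}$ as a finite combination $g=\sum_{\bar{c}\in\Lambda}\alpha_{\bar{c}}g_{\bar{c}}$ of basis elements, it therefore suffices to establish $(t_{a}g_{\bar{c}})(\bar{w})=g_{\bar{c}}(a\bar{w})$ for a single basis function $g_{\bar{c}}$ with $\bar{c}\in\Lambda$. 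A key simplification here is that every such basis element satisfies $\P(\bar{c})\neq 0$, since otherwise $g_{\bar{c}}\equiv 0$ could not belong to a Hamel basis; this guarantees that the conditional probabilities appearing below are well-defined.

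Next I would unfold the definition of the observable operator, $(t_{a}g_{\bar{c}})(\bar{w})=\P(a\mid\bar{c})\,g_{\bar{c}a}(\bar{w})$, and compare it with $g_{\bar{c}}(a\bar{w})$, splitting into two cases according to whether $\P(\bar{c}a)$ vanishes. If $\P(\bar{c}a)\neq 0$, then $g_{\bar{c}a}(\bar{w})=\P(\bar{w}\mid\bar{c}a)$, and the chain rule for conditional probability, $\P(a\bar{w}\mid\bar{c})=\P(\bar{w}\mid\bar{c}a)\P(a\mid\bar{c})$ (obtained by expanding $\P(\bar{c}a\bar{w})$ in two ways and dividing by $\P(\bar{c})\neq 0$), yields directly $\P(a\mid\bar{c})g_{\bar{c}a}(\bar{w})=\P(a\bar{w}\mid\bar{c})=g_{\bar{c}}(a\bar{w})$. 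If instead $\P(\bar{c}a)=0$, then $g_{\bar{c}a}\equiv 0$ forces the left-hand side to vanish, while $\P(\bar{c}a\bar{w})\leq\P(\bar{c}a)=0$ forces $g_{\bar{c}}(a\bar{w})=\P(a\bar{w}\mid\bar{c})=0$ as well, so the two sides again agree.

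The main obstacle is minor but worth stating: the bookkeeping around the conventions for the empty string. The edge case $\bar{w}=\epsilon$ must be checked against the stipulation that $g_{\bar{c}a}(\epsilon)=1$ exactly when $\P(\bar{c}a)\neq 0$, which I would verify separately and which again splits along $\P(\bar{c}a)=0$ versus $\P(\bar{c}a)\neq 0$, matching $g_{\bar{c}}(a)=\P(a\mid\bar{c})$ in both cases (note that $\P(\bar{c}a)=0$ forces $\P(a\mid\bar{c})=0$). Apart from this edge case, the argument is a direct computation, and the reduction to basis elements in the first step is precisely what ensures no zero-probability conditioning ever appears in a denominator.
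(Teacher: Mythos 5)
Your proposal is correct and follows essentially the same route as the paper's proof: reduce to basis elements $g_{\bar{c}}$ by linearity, then unwind the definition of $t_{a}$ and apply the chain rule $\P(a\bar{w}\mid\bar{c})=\P(\bar{w}\mid\bar{c}a)\P(a\mid\bar{c})$, with the case $\P(\bar{c}a)=0$ handled by noting both sides vanish. Your explicit check of the $\bar{w}=\epsilon$ convention is a small point of extra care that the paper's computation passes over silently, but it does not change the substance of the argument.
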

\begin{proof}
    Writing $g= \sum_{\bar{a}\in\Lambda}\alpha_{\bar{a}}g_{\bar{a}}$, explicit computation yields
    \begin{align*}
    t_{a}(g)(\bar{w}) &= \sum_{\bar{c}\in\Lambda} \alpha_{\bar{c}} t_{a}(g_{\bar{c}})(\bar{w})= \sum_{\substack{\bar{c}\in\Lambda,\\ \P(\bar{c}a)\neq 0}} \lambda_{\bar{c}} \P(a\mid\bar{c})\P(\bar{w}\mid \bar{c}a)= \sum_{\substack{\bar{c}\in\Lambda,\\ \P(\bar{c}a)\neq 0}} \lambda_{\bar{c}} \P(a\bar{w}\mid\bar{c}).
\end{align*}
But if $\P(\bar{c}a)=0$ then $\P(a\bar{w}\mid \bar{c})=0$ and so
\begin{align*}
    t_{a}(g_{\bar{b}})(\bar{w})=\sum_{\substack{\bar{c}\in\Lambda}} \lambda_{\bar{c}} \P(a\bar{w}\mid\bar{c})=\sum_{\substack{\bar{c}\in\Lambda}} \lambda_{\bar{c}} g_{\bar{c}}(a\bar{w})=g(a\bar{w}).
\end{align*}
\end{proof}
\begin{theorem}\label{cont-thm}
    Under the assumption of \Cref{approx_thm}, the observable operators $t_{a}$ are continuous with respect to the 2-norm induced by the inner product on $\mathcal{G}$. Additionally, $\norm{t_{a}}\leq 1$.
\end{theorem}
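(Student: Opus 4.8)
The plan is to reduce the statement to the explicit series formula for the inner product from \Cref{approx_thm}(ii), which is available under the standing hypothesis, and then to exploit a simple monotonicity of the probabilities $g_{\epsilon}(\bar{b})=\P(\bar{b})$ under prepending a symbol. Since $t_{a}$ is linear, it suffices to find a constant $C$ with $\norm{t_{a}g}\leq C\norm{g}$ for all $g\in\mathcal{G}$, and I will in fact obtain $C=1$, which gives both the norm bound and continuity at once.

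First I would invoke \Cref{crucial-lemma} to rewrite the action of $t_{a}$ pointwise as $(t_{a}g)(\bar{w})=g(a\bar{w})$. Feeding this into the diagonal case $\norm{h}^{2}=\lim_{n\to\infty}\sum_{\bar{a}\in\O^{n}} h(\bar{a})^{2}/g_{\epsilon}(\bar{a})$ of \Cref{approx_thm}(ii) yields
\[
    \norm{t_{a}g}^{2} = \lim_{n\to\infty}\sum_{\bar{w}\in\O^{n}}\frac{g(a\bar{w})^{2}}{g_{\epsilon}(\bar{w})},
\]
where I adopt the convention, inherited from the measure-theoretic construction, that summands with vanishing denominator are omitted: if $g_{\epsilon}(\bar{w})=\P(\bar{w})=0$ then $\P(a\bar{w})=0$ as well, and a short marginalization argument forces $g(a\bar{w})=0$, so nothing is lost. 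The target quantity is $\norm{g}^{2}=\lim_{n\to\infty}\sum_{\bar{b}\in\O^{n+1}} g(\bar{b})^{2}/g_{\epsilon}(\bar{b})$, and the only discrepancy is that the expression for $\norm{t_{a}g}^{2}$ carries $g_{\epsilon}(\bar{w})$ in the denominator rather than $g_{\epsilon}(a\bar{w})$.

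The crux is the inequality $g_{\epsilon}(a\bar{w})\leq g_{\epsilon}(\bar{w})$, i.e.\ $\P(a\bar{w})\leq\P(\bar{w})$. This follows from stationarity by marginalizing the first symbol: $\sum_{c\in\O}\P(c\bar{w})=\P(\bar{w})$. Because all summands are nonnegative, enlarging the denominator only shrinks each term, so at every finite $n$,
\[
    \sum_{\bar{w}\in\O^{n}}\frac{g(a\bar{w})^{2}}{g_{\epsilon}(\bar{w})} \leq \sum_{\bar{w}\in\O^{n}}\frac{g(a\bar{w})^{2}}{g_{\epsilon}(a\bar{w})} \leq \sum_{c\in\O}\sum_{\bar{w}\in\O^{n}}\frac{g(c\bar{w})^{2}}{g_{\epsilon}(c\bar{w})} = \sum_{\bar{b}\in\O^{n+1}}\frac{g(\bar{b})^{2}}{g_{\epsilon}(\bar{b})},
\]
where the middle step simply reinstates the remaining first symbols $c\neq a$ and the final step uses the bijection $\bar{b}\leftrightarrow(c,\bar{w})$ on $\O^{n+1}$. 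Since this holds for each $n$, passing to the limit gives $\norm{t_{a}g}^{2}\leq\norm{g}^{2}$.

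Finally, $\norm{t_{a}g}\leq\norm{g}$ exhibits $t_{a}$ as a bounded linear operator with $\norm{t_{a}}\leq 1$, and for linear maps boundedness is equivalent to continuity. I expect the only delicate point to be the bookkeeping around vanishing denominators --- namely verifying that the terms tacitly dropped on the left are genuinely zero (handled via $\P(a\bar{w})\leq\P(\bar{w})$ and the resulting $g(a\bar{w})=0$) --- together with the routine observation that, since the inequality is established at each finite $n$ before taking limits, no interchange of limit and summation beyond what \Cref{approx_thm}(ii) already guarantees is required.
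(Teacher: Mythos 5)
Your proposal is correct and follows essentially the same route as the paper's proof: rewrite $(t_{a}g)(\bar{w})=g(a\bar{w})$ via \Cref{crucial-lemma}, apply the series formula of \Cref{approx_thm}(ii), use $g_{\epsilon}(a\bar{w})\leq g_{\epsilon}(\bar{w})$ to enlarge each term, and complete the sum over $\O^{n+1}$ before passing to the limit. Your added care with the marginalization justification of $\P(a\bar{w})\leq\P(\bar{w})$ and with vanishing denominators fills in details the paper leaves implicit, but the argument is the same.
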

\begin{proof}
    This follows from the following computation
    \begin{align*}
        \norm{t_{a}g}^{2} = \lim_{n\to \infty} \sum_{\bar{a}\in \mathcal{O}^{n}}\frac{(t_{a}g(\bar{a}))^{2}}{g_{\epsilon}(\bar{a})} = \lim_{n\to \infty} \sum_{\bar{a}\in \mathcal{O}^{n}}\frac{(g(a\bar{a}))^{2}}{g_{\epsilon}(\bar{a})} \leq \lim_{n\to \infty} \sum_{\bar{a}\in \mathcal{O}^{n}}\frac{(g(a\bar{a}))^{2}}{g_{\epsilon}(a\bar{a})} \leq \lim_{n\to \infty} \sum_{\bar{b}\in \mathcal{O}^{n+1}}\frac{(g(\bar{b}))^{2}}{g_{\epsilon}(\bar{b})} = \norm{g}^{2},
    \end{align*}
    where the last inequality holds because
    \begin{align*}
        \sum_{\bar{a}\in \mathcal{O}^{n}}\frac{(g(a\bar{a}))^{2}}{g_{\epsilon}(a\bar{a})} \leq \sum_{\bar{a}\in \mathcal{O}^{n}}\frac{(g(a\bar{a}))^{2}}{g_{\epsilon}(a\bar{a})} + \sum_{\substack{\bar{b}\in \mathcal{O}^{n+1}\\ \bar{b}\neq a\bar{a}}}\frac{(g(\bar{b}))^{2}}{g_{\epsilon}(\bar{b})} = \sum_{\bar{b}\in \mathcal{O}^{n+1}}\frac{(g(\bar{b}))^{2}}{g_{\epsilon}(\bar{b})}.
    \end{align*}
\end{proof}

Incidentally, \Cref{crucial-lemma} also gives the following result.

\begin{theorem}
     The observable operators $t_{a}$ are continuous with respect to the supremum norm on $\mathcal{G}$ and $\norm{t_{a}}\leq 1$.
\end{theorem}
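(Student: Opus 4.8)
The plan is to read the result off directly from \Cref{crucial-lemma}, in exactly the same spirit as the $2$-norm bound in \Cref{cont-thm} was obtained from that lemma. First I would record that the supremum norm $\norm{g}_{\infty} := \sup_{\bar{w}\in\O^{\ast}}|g(\bar{w})|$ is finite for every $g\in\mathcal{G}$, so that it is genuinely a norm: each basis function satisfies $0\leq g_{\bar{a}}(\bar{w}) = \P(\bar{w}\mid\bar{a})\leq 1$ whenever $\P(\bar{a})\neq 0$ (and is $0$ otherwise), so any finite linear combination is a bounded function on $\O^{\ast}$.

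The core is a single computation. Fix $a\in\O$ and $g\in\mathcal{G}$. By \Cref{crucial-lemma} we have $(t_{a}g)(\bar{w}) = g(a\bar{w})$ for every $\bar{w}\in\O^{\ast}$, and therefore
\begin{align*}
    \norm{t_{a}g}_{\infty} = \sup_{\bar{w}\in\O^{\ast}}|(t_{a}g)(\bar{w})| = \sup_{\bar{w}\in\O^{\ast}}|g(a\bar{w})|.
\end{align*}
The key observation is that the map $\bar{w}\mapsto a\bar{w}$ carries $\O^{\ast}$ into $\O^{\ast}$, its image being precisely the set of nonempty strings beginning with $a$. Hence the supremum on the right is taken over a subset of the domain of $g$ and is dominated by the supremum over all of $\O^{\ast}$:
\begin{align*}
    \sup_{\bar{w}\in\O^{\ast}}|g(a\bar{w})| \leq \sup_{\bar{v}\in\O^{\ast}}|g(\bar{v})| = \norm{g}_{\infty}.
\end{align*}
Chaining the two displays gives $\norm{t_{a}g}_{\infty}\leq\norm{g}_{\infty}$ for all $g\in\mathcal{G}$. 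Since $t_{a}$ is linear, this bound shows it is bounded, hence continuous, with operator norm $\norm{t_{a}}\leq 1$.

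There is essentially no hard step here: once \Cref{crucial-lemma} is available, the result is immediate, because passing from $g$ to $t_{a}g$ merely restricts attention to a sub-collection of the values already taken by $g$. The only point that deserves a line of care is the preliminary check that $\norm{\cdot}_{\infty}$ is in fact a norm on $\mathcal{G}$, i.e. that every element of $\mathcal{G}$ is a bounded function; this follows from the bound $\P(\bar{w}\mid\bar{a})\leq 1$ together with the fact that $\mathcal{G}$ consists only of finite linear combinations of the $g_{\bar{a}}$.
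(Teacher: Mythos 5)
Your proof is correct and follows essentially the same route as the paper: apply \Cref{crucial-lemma} to write $(t_{a}g)(\bar{w})=g(a\bar{w})$ and then dominate the supremum over strings of the form $a\bar{w}$ by the supremum over all of $\O^{\ast}$. Your additional preliminary check that $\norm{\cdot}_{\infty}$ is finite on $\mathcal{G}$ (hence genuinely a norm) is a small, sensible addition that the paper leaves implicit.
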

\begin{proof}
    An explicit computation yields
    \begin{align*}
        \norm{t_{a}g}_{\infty}  = \sup_{\bar{w}\in\mathcal{O}^{\ast}}|t_{a}g(\bar{w})| = \sup_{\bar{w}\in\mathcal{O}^{\ast}}|g(a\bar{w})| \leq \sup_{\bar{w}\in\mathcal{O}^{\ast}}|g(\bar{w})| = \norm{g}_{\infty},
    \end{align*}
    which concludes the proof.
\end{proof}

\Cref{cont-thm} gives hopes that if the inner product space $\mathcal{G}$ was a Hilbert space, one could use one of the pathways outlined in the introduction and approximate the observable operators by finite-rank operators. The main original result of this thesis gives a full characterization of the conditions under which the space $\mathcal{G}$ is complete. Namely, we have the following.

\begin{theorem}
    Let $\norm{\cdot}$ be any norm on $\mathcal{G}$. Then $(\mathcal{G}, \norm{\cdot})$ is complete if and only if $\mathcal{G}$ is finite dimensional.
\end{theorem}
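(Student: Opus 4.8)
The plan is to prove the two implications separately, with the decisive structural input being that the Hamel basis of $\mathcal{G}$ is countable. Indeed, since $\O$ is countable, so is $\O^{\ast}=\bigcup_{i\geq 0}\O^{i}$, and hence the index set $\Lambda\subseteq\O^{\ast}$ giving the basis $(g_{\bar{a}})_{\bar{a}\in\Lambda}$ is at most countable. Thus the basis is either finite or countably infinite, and this dichotomy is exactly what lets the Baire category theorem do the work in the harder direction.

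For the ``if'' direction I would simply invoke the standard fact that every finite-dimensional normed space over $\R$ is complete. Concretely, all norms on a finite-dimensional vector space are equivalent, so $(\mathcal{G},\norm{\cdot})$ is linearly homeomorphic to $\R^{\dim\mathcal{G}}$ equipped with the Euclidean norm, and the latter is complete; completeness is preserved under the homeomorphism.

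For the ``only if'' direction I would prove the contrapositive: if $\mathcal{G}$ is infinite dimensional, then no norm makes it complete. Enumerate the now countably infinite basis as $g_{\bar{a}_{1}},g_{\bar{a}_{2}},\ldots$ and set $V_{n}:=\Span\{g_{\bar{a}_{1}},\ldots,g_{\bar{a}_{n}}\}$. Each $V_{n}$ is finite dimensional, hence complete in the restriction of $\norm{\cdot}$ and therefore closed in $\mathcal{G}$; each $V_{n}$ is also a proper subspace. A proper subspace of a normed space has empty interior, since if it contained a ball around one of its points, translating that point to the origin and rescaling would force the subspace to be all of $\mathcal{G}$. Hence each $V_{n}$ is closed with empty interior, i.e. nowhere dense. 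Because $\mathcal{G}=\bigcup_{n=1}^{\infty}V_{n}$, the space $\mathcal{G}$ is a countable union of nowhere dense sets. Were $(\mathcal{G},\norm{\cdot})$ complete, this would contradict the Baire category theorem, which guarantees that a complete metric space is not meager in itself. Therefore $\mathcal{G}$ cannot be complete under any norm.

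The conceptual crux is the countability of the Hamel basis: for a general infinite-dimensional space the basis need not be countable and the union $\bigcup_{n}V_{n}$ argument collapses, so it is precisely the countability of $\O$ (propagating to $\O^{\ast}$ and then to $\Lambda$) that makes the Baire-category dichotomy applicable. The supporting facts --- that finite-dimensional subspaces are closed and that proper subspaces are nowhere dense --- are routine once one knows that all norms on a finite-dimensional space are equivalent, so the only genuinely load-bearing step is the Baire category argument itself.
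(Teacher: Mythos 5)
Your proof is correct and takes essentially the same route as the paper: both arguments hinge on the countability of the Hamel basis (inherited from the countability of $\O^{\ast}$, hence of $\Lambda$) combined with the Baire category theorem. The only difference is that the paper cites as a known result that an infinite-dimensional complete normed space must have an uncountable Hamel basis, whereas you prove that fact directly via the decomposition $\mathcal{G}=\bigcup_{n}V_{n}$ with $V_{n}=\Span\{g_{\bar{a}_{1}},\ldots,g_{\bar{a}_{n}}\}$ closed, proper, and nowhere dense --- a more self-contained rendering of the same idea rather than a different approach.
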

\begin{proof}
    The sufficient condition for $\mathcal{G}$ being complete follows from the well-known fact that any finite-dimensional normed vector space is complete.\\
    \indent To prove the other direction, let us assume that $\mathcal{G}$ is complete. Suppose towards contradiction that $\mathcal{G}$ is infinite-dimensional. Recalling that $\mathcal{G}$ was defined to be the linear span of functions $g_{\bar{a}}$ for $\bar{a}\in \O^{\ast}$, and that $\O^{\ast}$ is countable it follows that the (Hamel) dimension of $\mathcal{G}$ must be countable. However, as a consequence of Baire's Category Theorem, any infinite-dimensional complete normed vector space must have an uncountable (Hamel) basis \cite{hamelDim}. This yields a contradiction and consequently, $\mathcal{G}$ is finite-dimensional.
\end{proof}

\begin{corollary}
    The inner product space $\mathcal{G}$ is a Hilbert space if and only if the corresponding OOM is finite-dimensional.
\end{corollary}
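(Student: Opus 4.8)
The plan is to obtain this corollary directly from the immediately preceding theorem, so that essentially no new mathematical work is required beyond unwinding definitions. First I would recall that a Hilbert space is by definition a complete inner product space, and that the inner product established on $\mathcal{G}$ induces the norm $\norm{g} := \sqrt{\innp{g}{g}}$. Hence $\mathcal{G}$ is a Hilbert space precisely when $(\mathcal{G}, \norm{\cdot})$ is complete with respect to this particular inner-product norm.

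Next I would invoke the preceding theorem, which asserts that for \emph{any} norm on $\mathcal{G}$ — and in particular for the inner-product norm $\norm{\cdot}$ just singled out — completeness of $(\mathcal{G}, \norm{\cdot})$ holds if and only if $\mathcal{G}$ is finite-dimensional. Finally I would connect this back to the OOM: by the definition given in Section 3, the dimension of the process (equivalently, of its OOM) is the vector-space dimension of $\mathcal{G}$, so the OOM is finite-dimensional exactly when $\mathcal{G}$ is finite-dimensional. Chaining the three equivalences — Hilbert $\iff$ complete in the inner-product norm $\iff$ finite-dimensional $\iff$ finite-dimensional OOM — yields the claim.

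The thing to be careful about, rather than a genuine obstacle, is that the preceding theorem is phrased for an arbitrary norm, so one must simply observe that the inner-product norm is a legitimate instance of it; there is no hidden difficulty here, since all the substantive content — namely the Baire-category argument forcing any infinite-dimensional complete normed space to possess an uncountable Hamel basis, in contradiction with the countability of $\O^{\ast}$ and hence of the spanning family $(g_{\bar{a}})_{\bar{a}\in\O^{\ast}}$ — has already been discharged in that theorem. Thus the corollary is purely a matter of specialization and renaming.
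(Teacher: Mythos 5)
Your proof is correct and is exactly the argument the paper intends (the paper leaves the corollary without proof precisely because it is this immediate specialization): a Hilbert space is a complete inner product space, the preceding theorem applies to the inner-product norm as a particular norm, and the OOM's dimension is by definition the dimension of $\mathcal{G}$. Nothing is missing.
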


Thus, in the infinite-dimensional case, the space $\mathcal{G}$ is simply too small to be complete, no matter the norm. Hence to develop the envisioned approximation theory for the observable operators, one needs to enlarge the space $\mathcal{G}$ while ensuring that the observable operators remain continuous. However, this is left for future research.
\section{Conclusions}

In conclusion, the research presented in this thesis explored a particular approach to developing an approximation theory of OOMs of infinite-dimensional processes. This approach was based on the observation that making the space of future distributions into a Hilbert space and proving the continuity of observable operators with respect to the associated 2-norm, would present the opportunity of following two distinct pathways towards an approximation theory. One through proving compactness of observable operators and the other through proving separability of the Hilbert space.

The research in this thesis focused on the first necessary steps of this approach. Namely, first, the inner product construction originally proposed in \cite{unpub-tutorial} has been put onto a mathematically rigorous footing and the missing proofs were rediscovered, including one that shows continuity of observable operators. Then an original theorem was proven which shows that an infinite-dimensional space of future distributions is too small to be a Hilbert space. Unfortunately, this result means that the original goal of making an infinite-dimensional space of future distributions into a Hilbert space cannot be achieved in the current framework. A possible solution to fixing it is to focus on the space $\phi(\mathcal{G})$ and the observable operators $\hat{t}_{a}:= \phi \circ t_{a} \circ \phi^{-1}$ on it\footnote{The map $\phi$ can be restricted to its range so that it becomes invertible.}. Then one could examine the closure of $\phi(\mathcal{G})$ in $L^{2}(\mu_{\epsilon})$ --- which is a Hilbert space --- and see whether the observable operators $\hat{t}_{a}$ can be continuously extended to observable operators $T_{a}$ on the Hilbert space $\overline{\phi(\mathcal{G})}$. If this is true, then approximating $T_{a}$ by finite-rank operators and restricting them to the subspace $\phi(\mathcal{G})$ would lead to a finite-rank approximation of $\hat{t}_{a}$ which could then be imported back to $\mathcal{G}$ using $\phi$. The exploration of this idea is left for future research.

\bibliographystyle{alpha}
\bibliography{literature}

\begin{appendices}
    
\section{Mathematical Background}\label{ap:appendix}

For the reader's convenience, the Hahn-Kolmogorov extension theorem, the Radon-Nikodym theorem and the required definitions are stated. The definitions used in the Hahn-Kolmogorv theorem and the theorem itself are based on \cite{tao-2011}, while the statement of the Radon-Nikodym theorem is adapted to the needs of this thesis and directly follows from the Radon-Nikodym theorem (and its proof) as presented in \cite{rudin-1987}. A reader in need of a measure theory recap or further details is referred to \cite{tao-2011}, \cite{bogachev2007measure} or \cite{rudin-1987}, while a brief introduction to $L^{p}$ spaces, which covers all their properties used in this thesis, can be found \cite{probText}. 

\begin{definition}
 Let $X$ be a set. A {\bf Boolean algebra} on $X$ is a collection $\mathcal{B}_{0}$ of subsets of $X$ which obeys the following properties:
 \begin{enumerate}
     \item (Empty set) $\emptyset\in\mathcal{B}_{0}$.
     \item (Complement) If $E\in \mathcal{B}_{0}$, then the complement $E^{c}:=X\setminus E$ also lies in $\mathcal{B}_{0}$.
     \item (Finite unions) If $E,F\in\mathcal{B}_{0}$, then $E\cup F\in \mathcal{B}_{0}$. 
 \end{enumerate}
\end{definition}

 \begin{definition}
     A {\bf pre-measure} on a Boolean algebra $\mathcal{B}_{0}$ is an extended real-valued function $\mu_{0}:\mathcal{B}_{0}\to[0,\infty]$ with the following properties:
     \begin{enumerate}
         \item $\mu_{0}(\emptyset)=0$.
         \item $\mu_{0}\left(\bigcup_{n=1}^{\infty}E_{n}\right)=\sum_{n=1}^{\infty}\mu_{0}(E_{n})$ whenever $E_{1},E_{2},...\in\mathcal{B}_{0}$ are disjoint sets such that $\bigcup_{n=1}^{\infty}E_{n}\in \mathcal{B}_{0}$.
     \end{enumerate}
 \end{definition}

\begin{definition}
    Let $X$ be a set. A {\bf $\sigma$-algebra} on $X$ is a collection $\mathcal{B}$ of subsets of $X$ which obeys the following properties:
    \begin{enumerate}
     \item (Empty set) $\emptyset\in\mathcal{B}$.
     \item (Complement) If $E\in \mathcal{B}$, then the complement $E^{c}:=X\setminus E$ also lies in $\mathcal{B}$.
     \item (Countable unions) If $E_{1},E_{2},...\in\mathcal{B}$, then $\bigcup_{i=1}^{\infty}E_{i}\in \mathcal{B}$. 
 \end{enumerate}
 We refer to the pair $(X,\mathcal{B})$ of a set $X$ together with a $\sigma$-algebra on that set as a {\bf measurable space}.
 \end{definition}

 \begin{definition}
     A {\bf measure} on a  $\sigma$-algebra $\mathcal{B}$ is an extended real-valued function $\mu:\mathcal{B}\to[0,\infty]$ with the following properties:
     \begin{enumerate}
         \item $\mu(\emptyset)=0$.
         \item $\mu\left(\bigcup_{n=1}^{\infty}E_{n}\right)=\sum_{n=1}^{\infty}\mu(E_{n})$ where $E_{1},E_{2},...\in\mathcal{B}$ are pairwise disjoint.
     \end{enumerate}
 \end{definition}

\begin{definition}
    A {\bf measure space} is a triple $(X,\mathcal{B},\mu)$ where $X$ is a set, $\mathcal{B}$ a $\sigma$-algebra on $X$ and $\mu$ a measure on $\mathcal{B}$. A measure space is {\bf finite} if $\mu(X)<\infty$. A measure space $\mu$ is {\bf $\sigma$-finite} if there is a sequence $X_{n}\in\mathcal{B}$ such that $\mu(X_{n})<\infty$ and $X=\bigcup_{i=1}^{\infty}X_{n}$. One also says that $\mu$ is finite/$\sigma$-finite with the measure space being implicit. A $\sigma$-finite pre-measure on a Boolean algebra is defined analogously.
\end{definition}

\begin{lemma}
    Let $\mathcal{E}$ be a collection of subsets of a set $X$. Then there exists a unique $\sigma$-algebra $\sigma(\mathcal{E})$ on $X$ such that
    \begin{enumerate}
        \item $\mathcal{E}\subset \sigma(\mathcal{E})$
        \item if $\mathcal{B}$ is $\sigma$-algebra with $\mathcal{E}\subset \mathcal{B}$, then $\sigma(\mathcal{E})\subset \mathcal{B}$.
    \end{enumerate}
    The unique $\sigma$-algebra $\sigma(\mathcal{E})$ is said to be {\bf generated by $\mathcal{E}$}.
\end{lemma}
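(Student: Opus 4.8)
The plan is to realize $\sigma(\mathcal{E})$ concretely as the intersection of all $\sigma$-algebras on $X$ that contain $\mathcal{E}$, and then to verify that this intersection has each of the required properties together with uniqueness.

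First I would introduce the family
\[
    \mathcal{F} := \{\, \mathcal{B} \mid \mathcal{B} \text{ is a } \sigma\text{-algebra on } X \text{ with } \mathcal{E} \subseteq \mathcal{B}\,\}
\]
and observe that $\mathcal{F}$ is nonempty, since the full power set $\mathcal{P}(X)$ is a $\sigma$-algebra containing $\mathcal{E}$. This guarantees that the candidate $\sigma(\mathcal{E}) := \bigcap_{\mathcal{B}\in\mathcal{F}} \mathcal{B}$ is a well-defined collection of subsets of $X$, being an intersection over a nonempty family.

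Next I would verify the three $\sigma$-algebra axioms for $\sigma(\mathcal{E})$, each of which reduces to the fact that the corresponding property holds in every member $\mathcal{B}\in\mathcal{F}$ simultaneously: the empty set lies in every such $\mathcal{B}$ and hence in $\sigma(\mathcal{E})$; if $E\in\sigma(\mathcal{E})$ then $E\in\mathcal{B}$ for every $\mathcal{B}$, so $E^{c}\in\mathcal{B}$ for every $\mathcal{B}$ and thus $E^{c}\in\sigma(\mathcal{E})$; and if $E_{1},E_{2},\ldots\in\sigma(\mathcal{E})$ then each $E_{i}$ lies in every $\mathcal{B}$, so $\bigcup_{i=1}^{\infty} E_{i}\in\mathcal{B}$ for every $\mathcal{B}$, whence the union lies in $\sigma(\mathcal{E})$. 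Property (1) is then immediate, as every $\mathcal{B}\in\mathcal{F}$ contains $\mathcal{E}$ and therefore so does their intersection. Property (2) is equally direct: any $\sigma$-algebra $\mathcal{B}$ with $\mathcal{E}\subseteq\mathcal{B}$ is by definition an element of $\mathcal{F}$, and an intersection is contained in each of its members, so $\sigma(\mathcal{E})\subseteq\mathcal{B}$.

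Finally, uniqueness follows from a mutual-inclusion argument. If $\mathcal{S}_{1}$ and $\mathcal{S}_{2}$ are both $\sigma$-algebras satisfying (1) and (2), then since $\mathcal{S}_{2}$ contains $\mathcal{E}$ by its property (1), applying property (2) of $\mathcal{S}_{1}$ yields $\mathcal{S}_{1}\subseteq\mathcal{S}_{2}$; by symmetry $\mathcal{S}_{2}\subseteq\mathcal{S}_{1}$, so $\mathcal{S}_{1}=\mathcal{S}_{2}$. I do not expect a genuine obstacle here, as the argument is entirely formal; the only step meriting care is the verification of closure under countable unions, where one must note that it is the \emph{same} countable family $E_{1},E_{2},\ldots$ that lies in each $\mathcal{B}\in\mathcal{F}$, so its union is forced into every member and hence into the intersection.
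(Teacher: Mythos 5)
Your proof is correct and complete, and it is the canonical argument: the paper itself states this lemma without proof in its background appendix (as standard material drawn from the measure-theory references), and the proof those references give is exactly yours --- take $\sigma(\mathcal{E})$ to be the intersection of the nonempty family of all $\sigma$-algebras on $X$ containing $\mathcal{E}$ (nonempty via the power set), verify the three axioms pass through the intersection, and obtain uniqueness by mutual inclusion from properties (1) and (2). Your closing remark about the \emph{same} countable family $E_{1},E_{2},\ldots$ lying in every member of the intersection is precisely the one step worth flagging, so nothing is missing.
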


 \begin{theorem}
     (Hahn-Kolmogorov theorem). Every pre-measure $\mu_{0}:\mathcal{B}_{0}\to [0,\infty]$ on a Boolean algebra $\mathcal{B}_{0}$ on $X$ can be extended to a measure $\mu:\sigma(\mathcal{B}_{0})\to[0,\infty]$ defined on the $\sigma$-algebra generated by $\mathcal{B}_{0}$. If $\mu_{0}$ is finite then the extension is unique and finite as well. Moreover, $\mu$ is given by
     \begin{align*}
            \mu(A) = \inf\left\{\sum_{i=1}^{\infty}{\mu}_{0}(A_{i}) \mid A\subseteq \bigcup_{i=1}^{\infty}A_{i};\; A_{i}\in \mathcal{B}_{0} \text{ for each } i\in\mathbb{N}\right\}
        \end{align*}
 \end{theorem}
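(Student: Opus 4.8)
The final statement to prove is the Hahn-Kolmogorov extension theorem: every pre-measure on a Boolean algebra extends to a measure on the generated $\sigma$-algebra, the extension is unique and finite when the pre-measure is finite, and it is given by the stated infimum formula.

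The plan is to proceed via the Carath\'eodory extension machinery, since this is the standard and most robust route and it produces exactly the infimum formula in the statement. First I would define the candidate outer measure $\mu^{\ast}:\mathcal{P}(X)\to[0,\infty]$ by the formula in the theorem, namely
\begin{align*}
    \mu^{\ast}(A) = \inf\left\{\sum_{i=1}^{\infty}\mu_{0}(A_{i}) \mid A\subseteq \bigcup_{i=1}^{\infty}A_{i};\; A_{i}\in \mathcal{B}_{0}\right\},
\end{align*}
and verify that it is a genuine outer measure: $\mu^{\ast}(\emptyset)=0$, monotonicity (a cover of the larger set covers the smaller), and countable subadditivity (concatenate covers of each $A_{n}$ and use an $\varepsilon/2^{n}$ argument so that the total cover has sum within $\varepsilon$ of $\sum_{n}\mu^{\ast}(A_{n})$). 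Next I would invoke Carath\'eodory's criterion: call a set $E\subseteq X$ measurable if $\mu^{\ast}(A)=\mu^{\ast}(A\cap E)+\mu^{\ast}(A\cap E^{c})$ for every test set $A$, and prove that the collection $\mathcal{M}$ of measurable sets is a $\sigma$-algebra on which $\mu^{\ast}$ restricts to a countably additive measure. This last fact is the classical Carath\'eodory lemma; I would either cite it or sketch it by checking closure under complements (immediate from the symmetry of the criterion) and under countable unions (via finite additivity on $\mathcal{M}$ together with countable subadditivity of $\mu^{\ast}$).

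The two steps that carry the real content are \emph{agreement} and \emph{containment}. For agreement, I would show $\mu^{\ast}(E)=\mu_{0}(E)$ for every $E\in\mathcal{B}_{0}$: the inequality $\mu^{\ast}(E)\le\mu_{0}(E)$ is trivial by taking the one-set cover $\{E\}$, while the reverse $\mu_{0}(E)\le\mu^{\ast}(E)$ uses the countable additivity of the pre-measure on the algebra applied to the disjointified pieces $E\cap A_{i}\setminus\bigcup_{j<i}A_{j}$ of any cover. For containment, I would show $\mathcal{B}_{0}\subseteq\mathcal{M}$, so that $\sigma(\mathcal{B}_{0})\subseteq\mathcal{M}$ and the restriction of $\mu^{\ast}$ to $\sigma(\mathcal{B}_{0})$ is the desired extension $\mu$; this amounts to verifying the Carath\'eodory splitting identity for $E\in\mathcal{B}_{0}$ against an arbitrary test set $A$, again by taking a near-optimal algebra cover of $A$ and splitting each cover element by $E$ and $E^{c}$ inside the algebra.

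The hard part will be the \emph{uniqueness} clause in the finite case, which is the part most relevant to the paper's applications, where it is invoked repeatedly to identify measures agreeing on $\mathcal{E}$. Here I would argue by a monotone/Dynkin-class argument: let $\nu$ be any measure on $\sigma(\mathcal{B}_{0})$ agreeing with $\mu_{0}$ on $\mathcal{B}_{0}$, and show the collection $\{E\in\sigma(\mathcal{B}_{0})\mid \nu(E)=\mu(E)\}$ is a $\lambda$-system (closed under complements and countable disjoint unions, using finiteness $\mu_{0}(X)<\infty$ so that subtraction $\mu(E^{c})=\mu(X)-\mu(E)$ is legitimate) containing the $\pi$-system $\mathcal{B}_{0}$; Dynkin's $\pi$-$\lambda$ theorem then forces agreement on all of $\sigma(\mathcal{B}_{0})$. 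Finiteness is exactly what makes the complement step work, and I would flag that without it only $\sigma$-finiteness would rescue uniqueness. Since the paper only needs the finite case, I would keep the finiteness hypothesis front and center and note that $\mu(X)=\mu_{0}(X)<\infty$ propagates finiteness to the extension immediately.
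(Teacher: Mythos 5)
Your proposal is correct: the Carath\'eodory outer-measure construction (outer measure from the infimum formula, agreement with $\mu_{0}$ on $\mathcal{B}_{0}$ via disjointification, Carath\'eodory measurability of algebra elements) combined with a Dynkin $\pi$-$\lambda$ argument for uniqueness in the finite case is the canonical proof and yields exactly the stated infimum formula. Note that the paper itself offers no proof to compare against --- the theorem is quoted in Appendix A as background with a citation to Tao (2011) --- and your route is essentially the one in that cited source, so there is nothing the paper does differently.
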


 \begin{definition}
     Let $\mu$ and $\lambda$ be measures on a measurable space $(X,\mathcal{B})$. We say that $\lambda$ is {\bf majorized} by $\mu$ if for all $A\in\mathcal{B}$ it holds that $\lambda(A)\leq\mu(A)$.
 \end{definition}

 \begin{theorem}
     (Radon-Nikodym Theorem, majorization case). Suppose $\mu, \lambda$ are finite measures on a measurable space $(X,\mathcal{B})$ and that $\lambda$ is majorized by $\mu$. Then there exists a $\mu$-integrable real-valued function $f$ with $0\leq f\leq 1$ almost everywhere with respect to $\mu$, such that
     \begin{align*}
         \lambda(A) = \int_{A}fd\mu
     \end{align*}
     for every $A\in\mathcal{B}$. Moreover, such function $f$ is unique up to sets of $\mu$-measure $0$. 
 \end{theorem}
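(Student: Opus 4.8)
The plan is to use von Neumann's Hilbert space argument, specialized to the majorization hypothesis so that $\mu$ itself can serve as the reference measure. The first observation is that majorization implies absolute continuity: if $\mu(A)=0$ then $0\le\lambda(A)\le\mu(A)=0$. Consequently the linear functional
\begin{align*}
    T(g) := \int_{X} g\, d\lambda
\end{align*}
is well defined on $L^{2}(\mu)$, since any two representatives agreeing $\mu$-almost everywhere also agree $\lambda$-almost everywhere. Using the pointwise consequence $\int|g|\,d\lambda\le\int|g|\,d\mu$ of majorization together with the Cauchy--Schwarz inequality, I would bound
\begin{align*}
    |T(g)| \le \int_{X}|g|\,d\lambda \le \int_{X}|g|\,d\mu \le \norm{g}_{L^{2}(\mu)}\sqrt{\mu(X)},
\end{align*}
which is finite because $\mu$ is a finite measure. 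Hence $T$ is a bounded linear functional on the Hilbert space $L^{2}(\mu)$, and the Riesz representation theorem supplies a unique $f\in L^{2}(\mu)$ with $T(g)=\int_{X} gf\, d\mu$ for every $g\in L^{2}(\mu)$.

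Next I would recover the integral formula by testing against indicators. For $A\in\mathcal{B}$, the function $g=\mathds{1}_{A}$ lies in $L^{2}(\mu)$ (finiteness of $\mu$ is used again), and
\begin{align*}
    \lambda(A) = T(\mathds{1}_{A}) = \int_{A} f\, d\mu,
\end{align*}
which is the desired density formula. To pin down the bounds I would argue on the level sets of $f$ by positivity. For the lower bound, set $A=\{f<0\}$; since $f<0$ there while $\int_{A} f\, d\mu=\lambda(A)\ge 0$, the integral must vanish, forcing $\mu(A)=0$, so $f\ge 0$ almost everywhere. For the upper bound, set $B=\{f>1\}$; then majorization and the formula give
\begin{align*}
    \mu(B) \ge \lambda(B) = \int_{B} f\, d\mu \ge \mu(B),
\end{align*}
so $\int_{B}(f-1)\, d\mu=0$ with a strictly positive integrand on $B$, forcing $\mu(B)=0$ and hence $f\le 1$ almost everywhere.

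Finally, integrability is immediate: from $0\le f\le 1$ almost everywhere and $\mu(X)<\infty$ one gets $\int_{X} f\, d\mu\le\mu(X)<\infty$. For uniqueness, if $f_{1}$ and $f_{2}$ both represent $\lambda$, then $\int_{A}(f_{1}-f_{2})\, d\mu=0$ for all $A\in\mathcal{B}$; applying this to $A=\{f_{1}>f_{2}\}$ and $A=\{f_{1}<f_{2}\}$ yields $f_{1}=f_{2}$ almost everywhere. The only genuinely delicate step is passing from the Riesz representative, which is a priori only an $L^{2}(\mu)$ equivalence class, to honest pointwise almost-everywhere bounds; this is precisely where the majorization hypothesis --- rather than mere absolute continuity --- earns its keep, since it is exactly what delivers the upper bound $f\le 1$. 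A more elementary alternative would be the classical supremum-of-subdensities construction, but the Hilbert space route is shorter and meshes with the $L^{2}$ machinery already developed in the paper.
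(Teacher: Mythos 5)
Your argument is correct and coincides in essence with the paper's own route: the paper offers no proof of this statement, deferring to \cite{rudin-1987}, and the proof there is precisely the von Neumann $L^{2}$ argument you give, except that Rudin proves the general theorem by working in $L^{2}(\lambda+\mu)$, whereas you exploit majorization to take $\mu$ itself as the reference measure. Your specialization is sound throughout --- the bound $|T(g)|\le \int_{X}|g|\,d\lambda\le\int_{X}|g|\,d\mu\le \sqrt{\mu(X)}\,\|g\|_{L^{2}(\mu)}$ (using that majorization for sets passes to integrals of nonnegative functions via simple functions and monotone convergence), the level-set arguments giving $0\le f\le 1$ $\mu$-almost everywhere, and the uniqueness argument are all correct --- and it buys a shorter, self-contained proof in which the upper bound $f\le 1$ falls out directly from $\lambda(B)\le\mu(B)$ rather than from the averaging argument Rudin uses for his density $h$.
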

\end{appendices}

\end{document}